\newtheorem{thm}{Theorem}[section]
\newtheorem{lem}[thm]{Lemma}
\theoremstyle{definition}
\newtheorem{rem}[thm]{Remark}
\numberwithin{equation}{section}
\begin{document}
\large{
\thispagestyle{empty}

\vspace{1 true cm}

\title [Almost critical regularity conditions based on one component] {several almost critical regularity conditions based on one component of the solutions  for 3D N-S
Equations }%
\author[Daoyuan Fang  \ Chenyin Qian ]{Daoyuan Fang \hspace{0.5cm} Chenyin Qian\\
Department of Mathematics, Zhejiang University,\\ Hangzhou, 310027,
China}

\thanks{{2000 Mathematics Subject Classification.}  35Q30; \ \
76D05}%
\thanks{{Key words.} 3D Navier-Stokes equations; Leray-Hopf
weak solution; Regularity criterion}%
\thanks{This work is supported  by NSFC
11271322 and 11331005.}
\thanks{E-mail addresses: dyf@zju.edu.cn (D. Fang), qcyjcsx@163.com(C. Qian)}

\begin{abstract}

In this article, we establish several almost critical regularity conditions  such that the
weak solutions of the 3D Navier-Stokes equations become regular, based on one component of the solutions, say $u_3$ and $\partial_3u_3$.
\end{abstract}

\maketitle

\section{Introduction}
 In this paper, we consider sufficient conditions for the regularity of  weak solutions of the Cauchy problem for the
Navier-Stokes equations
\begin{equation} \label{a}
 \left\{\begin{array}{l}
\displaystyle \frac{\partial u}{\partial t}-\nu \Delta
u+(u\cdot\nabla)u+\nabla p=0,
\ \mbox{\ in}\ \mathbb{R}^{3}\times (0,T),\\
\displaystyle\nabla\cdot u=0,\hspace{3.52cm} \mbox{\ in}\ \mathbb{R}^{3}\times (0,T),\\
\displaystyle u(x, 0)=u_{0},\hspace{0.2cm}\mbox{\ in}\ \mathbb{R}^{3},\\
\end{array}
\right. \end{equation} where $u=(u_{1},u_{2},u_{3}):
\mathbb{R}^{3}\times (0,T)\rightarrow \mathbb{R}^{3}$ is the
velocity field, $ p: \mathbb{R}^{3}\times (0,T)\rightarrow
\mathbb{R}^{3}$ is a scalar pressure, and $u_{0}$ is the initial
velocity field, $\nu>0$ is the viscosity.  We set
$\nabla_{h}=(\partial_{x_{1}},\partial_{x_{2}})$ as the horizontal
gradient operator and
$\Delta_{h}=\partial_{x_{1}}^{2}+\partial_{x_{2}}^{2}$ as the
horizontal Laplacian, and $\Delta$ and $\nabla$ are the usual
Laplacian and the gradient operators, respectively.  Here we use the
classical notations
$$
(u\cdot\nabla)v=\sum_{i=1}^{3}u_{i}\partial_{x_{i}}v_{k}, \ (
k=1,2,3),\ \ \ \nabla\cdot u=\sum_{i=1}^{3}\partial_{x_{i}}u_{i},
$$
 and for sake of simplicity,  we denote $\partial_{x_{i}}$
by $\partial_{{i}}$.
\par Let us
recall the definition of Leray-Hopf weak solution. We set
$$
\mathcal {V}=\{\phi: \ \mbox{ the 3D vector valued}\ C_{0}^{\infty}
\ \mbox{functions and}\ \nabla\cdot\phi=0\},
$$
which will form the space of test functions. Let $H$ and $V$ be the
closure spaces of $\mathcal {V}$ in $L^2$ under $L^2$-topology, and
in $H^1$ under $H^1$-topology, respectively.
\par
For $u_0\in H$,  the existence of weak solutions of (1.1) was
established by Leray  \cite{[14]} and Hopf in  \cite{[9]}, that is,
$u$
satisfies the following properties:\\
(i) $u\in C_{w}([0,T); H)\cap L^{2}(0,T; V)$, and $\partial_{t}u\in
L^{1}(0,T; V^{\prime})$, where $V^{\prime}$ is the dual space of
$V$;\\
(ii) $u$ verifies (1.1) in the sense of distribution, i.e., for
every test function $\phi\in C^{\infty}([0,T);\mathcal {V})$, and
for almost every $t, t_{0}\in (0,T)$, we have
$$
\begin{array}{ll}
 \displaystyle&\displaystyle\int_{\mathbb{R}^{3}} u(x,t)\cdot\phi(x,t)dx-\int_{\mathbb{R}^{3}}
u(x,t_{0})\cdot\phi(x,t_{0})dx\vspace{2mm}\\
\displaystyle &\ \ \ \
=\displaystyle\int_{t_{0}}^{t}\int_{\mathbb{R}_{3}}[u(x,t
)\cdot(\phi_{t}(x,t)+\nu\Delta\phi(x,t))]dxds\vspace{2mm}\\
&\ \ \ \ \ \ \
+\displaystyle\int_{t_{0}}^{t}\int_{\mathbb{R}_{3}}[(u(x,t)\cdot\nabla)\phi(x,t)]\cdot
u(x,t))]dxds
\end{array}
$$
 (iii) The energy inequality,
i.e.,
$$
\|u(\cdot,t)\|_{L^{2}}^{2}+2\nu\int_{t_0}^{t}\|\nabla
u(\cdot,s)\|_{L^{2}}^{2}ds\leq\|u_{0}\|_{L^{2}}^{2},
$$
for every $t$ and almost every $t_{0}$.

It is well known, if $u_{0}\in V$, a weak solution becomes  strong
solution of (1.1) on $(0, T)$ if, in addition, it satisfies
$$
u\in C([0,T); V)\cap L^{2}(0,T; H^{2}) \ \mbox{and}\
\partial_{t}u\in L^{2}(0,T; H).
$$
We know the strong solution is regular(say, classical) and unique
(see, for example, \cite{[21]}, \cite{[22]}).
\par    For the 2D case, just as the authors  said in \cite{[3]},
 the Navier-Stokes equations \eqref{a} have unique weak and
strong solutions which exist globally in time. However, the global
regularity of solutions  for the 3D Navier-Stokes equations is a
major and challenging problem, the weak solutions are known to exist
globally in time, but the uniqueness, regularity, and continuous
dependence on initial data for weak solutions are still open
problems. Furthermore, strong solutions in the 3D case are known to
exist for a short interval of time whose length depends on the
initial data. Moreover, this strong solution is known to be unique
and to depend continuously on the initial data (see, for example,
\cite{[21]}, \cite{[22]}).
\par There are many  interesting sufficient conditions which guarantee
that a given weak solution is smooth, and the first result is
usually referred as Prodi-Serrin  (PS) conditions (see \cite{[18]} and
\cite{[20]}), i.e. if additional  the weak solution $u$ is in the
class of
\begin{equation}\label{bba} u\in L^{t}(0,T; L^{s}(\mathbb{R}^{3})),\ \
\frac{2}{t}+\frac{3}{s}=1,\ s\in]3,\infty],
\end{equation} then the
weak solution becomes regular. As to $s=3,$  Escauriaza, Seregin and
$\check{\mbox{S}}$ver$\acute{\mbox{a}}$k in \cite{[35]} established
the $L^{\infty,3}$ regularity criterion which says that if a weak
solution $u\in L^{\infty}(0,T; L^{3}(\mathbb{R}^{3}))$, then it is
regular. It is well known that if $(u, p)$ solves the Navier-Stokes
equations, then so does $(u_{\lambda}, p_{\lambda}) $ for all
$\lambda>0$ , where $u_{\lambda}(x, t)=\lambda u(\lambda x,
\lambda^2t), p_{\lambda}(x, t)=\lambda^2p(\lambda x, \lambda^2t)$.
The class of Serrin's type is important from a viewpoint of scaling
invariance, which implies that
$\|u_\lambda\|_{L^tL^s}=\|u\|_{L^{t}L^s}$ holds for all $\lambda>0$
if and only if $\frac{2}{t}+\frac{3}{s}=1$. The full regularity of
weak solutions can also be proved under alternative assumptions on
the gradient of the velocity $\nabla u$. In 1995,
Beir$\tilde{\mbox{a}}$o da Veiga \cite{[01]} established a Serrin's
type regularity criterion on the gradient of the velocity field, if
$$
\nabla u\in L^{t}(0,T; L^{s}(\mathbb{R}^{3})),\ \
\frac{2}{t}+\frac{3}{s}=2,\ s\in[\frac{3}{2},\infty].
$$
\par It is shown that the additional conditions in terms of
only one velocity component, say $u_3,$ cannot satisfy the same PS
conditions as above and have a gap remained (see, for example,
\cite{[16]}, \cite{[25]}, \cite{[3]}, \cite{[26]} and \cite{[39]}).
Similarly, when we provide sufficient conditions in terms of only
one of the nine components of the gradient of velocity field (i.e.,
the velocity Jacobian matrix), the gap seems enlarged (see, for
example, \cite{[2]}, \cite{[201]}, \cite{[202]} and \cite{[26]}).
As to the results refer to $\nabla u_3$, one can find in
\cite{[17]}, \cite{[27]},  \cite{[10]} and \cite{[39]}. The reason
to lead to  the gap is from the term $u\cdot\nabla u$.  Especially,
when we give the conditions on $u_3$ in some Lebesgue space, the
terms $\partial_iu_j$, $i,j=1,2$ are difficult to control. In order
to make sure the sufficient conditions satisfy the PS indexes,
authors may consider  the combined version of the regularity
criterion (based on two or more components of velocity or gradient
of velocity). For example, in \cite{[36]}, \cite{[37]} and
\cite{[38]}, authors investigated regularity criterion in terms of
$\partial_3 u$. For other combined version of the critical
regularity criterion, we refer to \cite{[36]}, \cite{[44]},
\cite{[41]}. There are many other versions of regularity criteria on
component of velocity, the component of gradient of velocity or the
combined of the components. For example, see \cite{[4]}, \cite{[6]},
\cite{[12]}, \cite{[13]}, \cite{[15]}, \cite{[31]}, \cite{[19]},
 \cite{[24]}.\par
 In this paper, we will get several almost critical
regularity criterion based on only one  velocity component
$u_3$ and its partial derivative $\partial_3u_3$. By using the anisotropic integrability properties on the
spaces variable, we obtain a better result than previous ones. A
crucial point is that we improve the inequality obtained in
\cite{[2]} to the anisotropic case (see Lemma \ref{l2.2} below for
detail).  In Theorem \ref{t1.1} below, we impose the assumption only
on the $\partial_3u_3$,  we see that the indexes satisfy the
``quasi-PS type" (the scaling indexes satisfy the strict inequality).
 In
Theorem \ref{t1.3}, we will give the quasi-PS type condition on $u_3$
to prove that regularity of $u$, and  we see that the coupled
condition on $\partial_3u_3$ is scaling invariant.
 \par Our main results  can be stated in the
following:
\begin{thm}\label{t1.1}
 Let $u$ be a Leray-Hopf weak solution to the 3D Navier-Stokes
equations \eqref{a} with the initial value $u_{0}\in V$. Suppose one of the following two
items are satisfied.\\
$(i)$ If $\partial_3u_3$ satisfies
\begin{eqnarray}\label{z} \sup_{0\leq t\leq T}\left\|\|
\partial_{3}u_{3}(t)\|_{L^{\alpha}_{x_3}}\right\|_{L^{\beta}_{x_1,x_2}}\leq M, \ \mbox{for some} \ M>0, \end{eqnarray}
where $\alpha$ and $\beta$ satisfy
\begin{equation} \label{f}
 1\leq\alpha\leq\beta, \ 2<\beta\leq+\infty.  \end{equation}
$(ii)$ If $u_3$ and $\partial_3u_3$ satisfy
\begin{eqnarray}\label{5ab} u_3\in L^{\infty}(0,T;
L^{3}(\mathbb{R}^3))\ \ \mbox{and}\ \ \sup_{0\leq t\leq T}\left\|\|
\partial_{3}u_{3}(t)\|_{L^{\alpha}_{x_3}}\right\|_{L^{\beta}_{x_1,x_2}}\leq M,
\ \mbox{for some} \ M>0,
\end{eqnarray}
where $\alpha$ and $\beta$ satisfy
\begin{equation} \label{f1}
 \begin{array}{l}
\displaystyle \frac{1}{\alpha}+\frac{2}{\beta}<2\ \mbox{and} \
1<\alpha\leq\beta, \ \frac{3}{2}<\beta\leq 2.
\end{array}
 \end{equation}
Then $u$ is regular.
\end{thm}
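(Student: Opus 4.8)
The plan is to combine the standard energy method with a continuation argument. Since $u_{0}\in V$, the local existence theory furnishes a unique strong solution on a maximal interval $[0,T^{*})$, and the usual continuation criterion guarantees that $u$ is regular on $[0,T]$ as soon as we establish the a priori bound $\sup_{0\le t\le T}\|\nabla u(t)\|_{L^{2}}<\infty$. Thus the whole matter reduces to an $H^{1}$ estimate. Testing \eqref{a} with $-\Delta u$ and using $\nabla\cdot u=0$ to eliminate the pressure, I obtain the identity
$$\frac{1}{2}\frac{d}{dt}\|\nabla u\|_{L^{2}}^{2}+\nu\|\Delta u\|_{L^{2}}^{2}=\int_{\mathbb{R}^{3}}(u\cdot\nabla u)\cdot\Delta u\,dx=:I,$$
and after integrating by parts in each $\partial_{k}$ and invoking incompressibility once more, $I=-\sum_{i,j,k}\int \partial_{k}u_{i}\,\partial_{i}u_{j}\,\partial_{k}u_{j}\,dx$. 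The entire game is to control $I$ using only the quantities assumed bounded, namely $\partial_{3}u_{3}$ in case $(i)$ and additionally $u_{3}$ in case $(ii)$.

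The decomposition is the heart of the argument. The idea is to exploit $\partial_{3}u_{3}=-(\partial_{1}u_{1}+\partial_{2}u_{2})$ repeatedly, so that the dangerous products of purely horizontal derivatives get re-expressed through the single scalar $\partial_{3}u_{3}$. I group the $27$ terms of $I$ according to which factor carries the index $3$; those that do not involve the third direction are reorganized, via further integration by parts and the divergence-free relation, into integrals each carrying a factor $\partial_{3}u_{3}$ (for $(i)$) or a factor $u_{3}$ (the genuinely scaling-critical part, for $(ii)$). The target is a bound of the schematic form $|I|\le C\int|\partial_{3}u_{3}|\,|\nabla u|^{2}\,dx$ in case $(i)$, together with an extra contribution $C\int|u_{3}|\,|\nabla u|\,|\nabla^{2}u|\,dx$ in case $(ii)$, every remaining piece being either of this type or directly absorbable.

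For the model integral $\int|\partial_{3}u_{3}|\,|\nabla u|^{2}\,dx$ I apply H\"older first in $x_{3}$ (exponents $\alpha,\alpha'$) and then in $(x_{1},x_{2})$ (exponents $\beta,\beta'$), with $\alpha',\beta'$ the conjugate exponents, to extract the controlled factor $\big\|\|\partial_{3}u_{3}\|_{L^{\alpha}_{x_{3}}}\big\|_{L^{\beta}_{x_{1},x_{2}}}\le M$ and leave the anisotropic norm $\|\nabla u\|^{2}_{L^{2\beta'}_{x_{1},x_{2}}L^{2\alpha'}_{x_{3}}}$. The anisotropic interpolation inequality Lemma \ref{l2.2} then bounds this by $C\|\nabla u\|_{L^{2}}^{2(1-\theta)}\|\Delta u\|_{L^{2}}^{2\theta}$, where the exponent $\theta=\theta(\alpha,\beta)$ satisfies $2\theta<2$ precisely when $\frac{1}{\alpha}+\frac{2}{\beta}<2$; this inequality holds automatically under \eqref{f} (since $\frac{2}{\beta}<1$ and $\frac{1}{\alpha}\le 1$) and is imposed outright in \eqref{f1}. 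Young's inequality then absorbs $\frac{\nu}{2}\|\Delta u\|_{L^{2}}^{2}$ into the dissipation, yielding
$$\frac{d}{dt}\|\nabla u\|_{L^{2}}^{2}+\nu\|\Delta u\|_{L^{2}}^{2}\le C\,g(t)\,\|\nabla u\|_{L^{2}}^{2},$$
with $g\in L^{1}(0,T)$ by the energy inequality $\int_{0}^{T}\|\nabla u\|_{L^{2}}^{2}\,ds<\infty$; Gronwall's inequality then delivers the bound. In case $(ii)$ the $u_{3}$-contribution is treated in the same spirit, but the critical factor $u_{3}$ is measured in $L^{\infty}(0,T;L^{3})$ (H\"older with the scale-invariant exponent $3$, followed by Gagliardo--Nirenberg), which is exactly the input that compensates for the borderline range $\frac{3}{2}<\beta\le 2$.

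The main obstacle is the anisotropic interpolation of Lemma \ref{l2.2}. Routing all of the horizontal-derivative information through the single scalar $\partial_{3}u_{3}$ forces one to measure $|\nabla u|^{2}$ in the mixed norm $L^{2\beta'}_{x_{1},x_{2}}L^{2\alpha'}_{x_{3}}$, and obtaining its sharp control by $\|\nabla u\|_{L^{2}}$ and $\|\Delta u\|_{L^{2}}$ is what produces the strictly subcritical exponent $2\theta<2$ on $\|\Delta u\|_{L^{2}}$ and hence the absorption. Getting the exponent bookkeeping to respect \eqref{f} and \eqref{f1} while keeping this power strictly below $2$ is the crux; the endpoint $\beta=2$ in $(ii)$ is the tightest, and it is precisely the extra critical control $u_{3}\in L^{\infty}(0,T;L^{3})$ that closes the estimate there without requiring any smallness.
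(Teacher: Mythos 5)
Your plan has a genuine gap at its central step: the claimed reduction of the whole trilinear term to integrals carrying only $\partial_3u_3$ or $u_3$ is false. Incompressibility only lets you replace $\partial_1u_1+\partial_2u_2$ by $-\partial_3u_3$; it does nothing for the terms of $I=-\sum_{i,j,k}\int\partial_ku_i\,\partial_iu_j\,\partial_ku_j\,dx$ in which the \emph{derivative} index is $3$ but both \emph{component} indices are horizontal, e.g.
\begin{equation*}
\sum_{i,j=1}^{2}\int_{\mathbb{R}^3}\partial_3u_i\,\partial_iu_j\,\partial_3u_j\,dx ,
\end{equation*}
a typical member being $\int\partial_3u_1\,\partial_1u_2\,\partial_3u_2\,dx$. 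Integration by parts applied to such a term only produces factors $u_1,u_2$ (uncontrolled), never $u_3$ or $\partial_3u_3$. As a sanity check, take $u_3\equiv 0$ (so both hypotheses hold with $M$ arbitrarily small): the above sum reduces to $\int\{\partial_1u_1[(\partial_3u_1)^2-(\partial_3u_2)^2]+\partial_3u_1\partial_3u_2(\partial_1u_2+\partial_2u_1)\}\,dx$, which has no sign and no controlled factor, so your differential inequality would assert that the enstrophy of every such flow is nonincreasing, which does not follow from your assumptions. These terms are cubic in $\nabla u$ and cannot be absorbed by $\nu\|\Delta u\|_{L^2}^2$ either; the best available pointwise bound is $|\nabla_hu|\,|\nabla u|^2$, which is useless until $\nabla_hu$ has been estimated.

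This is exactly why the paper runs a \emph{two-tier} argument rather than testing with $-\Delta u$ alone. Step 1: test with $-\Delta_h u$; for that multiplier the structure of the nonlinearity allows every term, after integration by parts, to carry a factor $u_3$, giving the bound \eqref{3.6a}, $\frac12\frac{d}{dt}\|\nabla_hu\|_{L^2}^2+\nu\|\nabla_h\nabla u\|_{L^2}^2\le C\int|u_3||\nabla u||\nabla_h\nabla u|\,dx$. This trilinear form is then estimated by Lemma \ref{l2.2} — which, contrary to your description, is not an interpolation bound producing $\|\Delta u\|_{L^2}$, but a trilinear estimate on $\int\phi fg$ whose right-hand side involves only the \emph{horizontal} derivatives $\partial_1f,\partial_2f$; applied with $\phi=u_3$, $f=\nabla u$, $g=\nabla_h\nabla u$, everything can be absorbed by the dissipation $\nu\|\nabla_h\nabla u\|_{L^2}^2$ that this multiplier actually provides. (Note that even case $(i)$ is run through $u_3$, with $\|u_3\|_{L^2}$ supplied by the energy inequality and the $\partial_3u_3$ hypothesis entering through the lemma; the paper never needs $\int|\partial_3u_3||\nabla u|^2$ at all.) Step 2: test with $-\Delta u$; now every term of the nonlinearity carries a factor $|\nabla_hu|$, as in \eqref{3.11}, and the Step 1 bounds on $\sup_t\|\nabla_hu\|_{L^2}$ and $\int_0^t\|\nabla_h\nabla u\|_{L^2}^2$ close the Gronwall argument. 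Without Step 1, the terms identified above are simply out of reach of the hypotheses \eqref{z} and \eqref{5ab}.
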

\begin{figure} \centering
\includegraphics[width=0.35\textwidth,height=0.25\textheight]{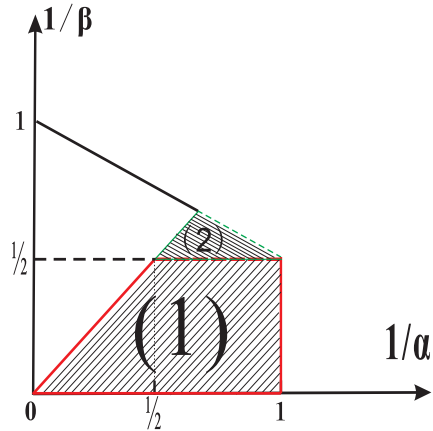}
\caption{ Range of $(\alpha,\beta)$}\label{fig:128} The domain
"\textbf{(1)}"  means the range of $(\alpha,\beta)$ in Theorem
\ref{t1.1} $(i)$. The domain "\textbf{(2)}" means the result of
 Theorem \ref{t1.1} $(ii)$.
\end{figure}
\begin{rem}\label{r1.1}
As we know that there is a large gap between the regularity criteria
which have been obtained so far only on $\partial_3u_3$ in Lebesgue
space and the PS type condition:
$$
\partial_3u_3\in L^{t}(0,T; L^{s}(\mathbb{R}^{3})),\ \
\frac{2}{t}+\frac{3}{s}=2,\ s\in[\frac{3}{2},\infty].
$$
 The purpose of $(i)$ in Theorem\ref{t1.1}  is to narrow this gap, and it
shows that our criterion is of quasi-PS type.  The range of the
$(\alpha,\beta)$ is shown by the domain ``\textbf{(1)}" in Figure 1.
The condition \eqref{z} ahows the different integrability on vertical and
horizontal components. If we choose $\alpha=1$ and $\beta$ tends to
$2^{+}$, we see that the limit is a point of the line
$1/\alpha+2/\beta=2$ (see Figure 1 for detail). When
$\alpha=\beta>2$, \eqref{z} becomes $\partial_3u_3\in
L^{\infty}(0,T; L^{\alpha}(\mathbb{R}^{3}))$, $\alpha>2$, and this
result reduce to the endpoint version of regularity criterion of
\cite{[2]} (It can be obtained by using the method of \cite{[2]}
even though the authors did not mentioned). Moreover, we recall  the
endpoint version of regularity criterion $\partial_ju_k\in
L^{\infty}(0,T; L^{3}(\mathbb{R}^{3}))$ in Theorem 1.1 of
\cite{[201]}. We see that this result is also an improvement of the
case of $j=k.$
\end{rem}
\begin{rem}\label{r1.2}
 Since the endpoint PS type condition
on $u$ makes sure the weak solution regular (see \cite{[35]}). The
$(ii)$ in Theorem\ref{t1.1} gives a depiction and comparison between
the endpoint version of regularity criterion on $u_3$ and $u$.  The
range of the $(\alpha,\beta)$ is shown by the domain ``\textbf{(2)}"
in Figure 1.  In case of  $\frac{3}{2}<\beta\leq 2$, we also see
that the line $1/\alpha+2/\beta=2$ is the limit of the case of range
of $(\alpha,\beta).$
\end{rem}

\begin{thm}\label{t1.3}
 Suppose that $u_{0}\in V$, and
$u$ is a Leray-Hopf weak solution to the 3D Navier-Stokes equations
\eqref{a}.
 Suppose that
\begin{eqnarray}\label{5a1} 1<\alpha<+\infty,\ \
\max\left\{\frac{11\alpha-12}{3(\alpha-1)},3\right\}<
s\leq\frac{11\alpha-10}{3(\alpha-1)},
\end{eqnarray}
and $u$ satisfies  the following conditions
 \begin{eqnarray}\label{5a2} u_3\in L^{\infty}(0,T;
L^{s}(\mathbb{R}^3)),
\end{eqnarray}
and
\begin{eqnarray}\label{z2}
\displaystyle\int_0^T\left\|\|
\partial_{3}u_{3}(\tau)\|_{L^{\alpha}_{x_3}}\right\|
_{L^{\beta}_{x_1,x_2}}^{p}d\tau\leq
M, \ \mbox{for some} \ M>0, \end{eqnarray} 
where
$$\displaystyle\beta=\frac{2\alpha}{(11\alpha-10)-3s(\alpha-1)}\ \mbox{and}\
p=\frac{2\alpha}{3(\alpha-1)(s-3)}.$$\\
Then $u$ is regular. 
\end{thm}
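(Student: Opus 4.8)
The plan is to derive an a priori bound on $\|\nabla u(t)\|_{L^2}$ on $[0,T)$; by the strong-solution theory recalled in the introduction this upgrades the Leray--Hopf solution to a regular one. Since $u_0\in V$ there is a local strong solution, and it suffices to work with it and show its $H^1$ norm cannot blow up before $T$. Testing the momentum equation in \eqref{a} against $-\Delta u$ and using $\nabla\cdot u=0$ (which kills the pressure, since $\int_{\mathbb{R}^3}\Delta u\cdot\nabla p\,dx=0$) gives
\begin{equation*}
\frac{1}{2}\frac{d}{dt}\|\nabla u\|_{L^2}^2+\nu\|\Delta u\|_{L^2}^2 = \int_{\mathbb{R}^3}(u\cdot\nabla)u\cdot\Delta u\,dx = -\sum_{i,j,k=1}^{3}\int_{\mathbb{R}^3}\partial_k u_i\,\partial_i u_j\,\partial_k u_j\,dx =: I,
\end{equation*}
the second equality following from integration by parts together with $\nabla\cdot u=0$. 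Everything then reduces to estimating the cubic form $I$ using only the two controlled quantities in \eqref{5a2} and \eqref{z2}.

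The key structural step is to reorganize $I$ so that each surviving term exposes a factor of either $u_3$ or $\partial_3u_3$. Using the incompressibility relation $\partial_3u_3=-(\partial_1u_1+\partial_2u_2)$ and integrating by parts in the horizontal variables, I would write $I=I_a+I_b$, where $I_a$ gathers the terms in which a factor $u_3$ can be isolated (schematically $\int|u_3|\,|\nabla u|\,|\nabla^2u|\,dx$) and $I_b$ gathers the terms governed by the horizontal divergence (schematically $\int|\partial_3u_3|\,|\nabla u|^2\,dx$). This is the one-component analogue of the reductions underlying the $u_3$-only and $\partial_3u_3$-only criteria, and it is exactly at this point that the anisotropic refinement of the inequality from \cite{[2]}, recorded in Lemma \ref{l2.2}, is required; note that $I_b$ is genuinely needed, as $u_3\in L^\infty L^s$ with $s>3$ is supercritical and $I_a$ alone does not close.

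For $I_b$ I would apply anisotropic H\"older (first in $x_3$, then in $(x_1,x_2)$) to factor out the norm appearing in \eqref{z2}, and control the remaining $|\nabla u|^2$ by the anisotropic Gagliardo--Nirenberg-type inequality of Lemma \ref{l2.2}, producing a product $\|\nabla u\|_{L^2}^{a}\|\Delta u\|_{L^2}^{b}$. The scale invariance of \eqref{z2} forces the time exponent conjugate to $b$ to equal $p$, so that Young's inequality absorbs a $\|\Delta u\|_{L^2}^2$ factor into the viscous term and leaves a contribution $C\,g(t)\,\|\nabla u\|_{L^2}^2$, where $g(t)=\big\|\,\|\partial_3u_3(t)\|_{L^\alpha_{x_3}}\big\|_{L^\beta_{x_1,x_2}}^{p}$ lies in $L^1(0,T)$ by \eqref{z2}. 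For $I_a$ I would use H\"older with $\|u_3\|_{L^s}$ and Gagliardo--Nirenberg for the two gradient factors, obtaining $C\|u_3\|_{L^s}\|\nabla u\|_{L^2}^{1-3/s}\|\Delta u\|_{L^2}^{1+3/s}$; the hypothesis $s>3$ makes the power $1+3/s$ strictly below $2$, so this term is again absorbable and leaves $C\|u_3\|_{L^s}^{2s/(s-3)}\|\nabla u\|_{L^2}^2$, which is controlled since $u_3\in L^\infty(0,T;L^s)$ by \eqref{5a2}. Collecting the estimates,
\begin{equation*}
\frac{d}{dt}\|\nabla u\|_{L^2}^2+\nu\|\Delta u\|_{L^2}^2\le C\big(1+g(t)\big)\|\nabla u\|_{L^2}^2,
\end{equation*}
and Gronwall's inequality, together with $g\in L^1(0,T)$, yields the sought bound on $\|\nabla u\|_{L^2}$.

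The main obstacle is the simultaneous closure of $I_a$ and $I_b$: the H\"older split and the Gagliardo--Nirenberg interpolation parameters must be chosen so that the $\partial_3u_3$-term is exactly critical (its $\|\Delta u\|_{L^2}$-power conjugate to $p$) while the $u_3$-term stays strictly subcritical and hence absorbable. The admissible range $\max\{\frac{11\alpha-12}{3(\alpha-1)},3\}<s\le\frac{11\alpha-10}{3(\alpha-1)}$, with $\beta=\frac{2\alpha}{(11\alpha-10)-3s(\alpha-1)}$ and $p=\frac{2\alpha}{3(\alpha-1)(s-3)}$, is precisely the set of exponents for which every interpolation is legitimate: one checks $\alpha\le\beta\le\infty$, with $\beta=\alpha$ and the Serrin scaling $\frac{2}{p}+\frac{3}{\alpha}=2$ recovered at the lower endpoint and $\beta=\infty$ at the upper one, and for which the absorption into $\nu\|\Delta u\|_{L^2}^2$ goes through. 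Verifying that these constraints hold throughout the whole window, rather than at isolated points, is the technical heart of the argument.
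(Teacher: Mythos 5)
Your proposal contains a genuine gap, and it sits exactly at the step you call the ``key structural step.'' When you test with the full Laplacian $-\Delta u$, the cubic term $I=\int_{\mathbb{R}^3}(u\cdot\nabla)u\cdot\Delta u\,dx$ \emph{cannot} be reorganized so that every surviving term exposes a factor of $u_3$ or $\partial_3u_3$. Concretely, the piece $\int_{\mathbb{R}^3}u_1\,\partial_1u_2\,\partial_{33}u_2\,dx$ produces, after integration by parts in $x_3$, the term $-\int_{\mathbb{R}^3}\partial_3u_1\,\partial_1u_2\,\partial_3u_2\,dx$; it involves only the horizontal components $u_1,u_2$, and incompressibility is of no help here, since $\nabla\cdot u=0$ only converts the full horizontal divergence $\partial_1u_1+\partial_2u_2$ into $-\partial_3u_3$, not individual entries such as $\partial_1u_2$. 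The best reduction available for $I$ (and the one the paper actually proves in its $L_1,L_2,L_3$ computation, see \eqref{3.10}--\eqref{3.11}) is $|I|\le C\int_{\mathbb{R}^3}|\nabla_hu|\,|\nabla u|^2\,dx$, and $\nabla_hu$ is not controlled by the hypotheses \eqref{5a2} and \eqref{z2}. So your $I_a+I_b$ decomposition does not exist, and with it the whole single-step closure collapses; this obstruction is precisely the reason one-component criteria are hard and why a hypothesis on $u_3$ alone at supercritical level cannot be fed directly into the $-\Delta u$ energy estimate.

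What the paper does instead, and what your argument is missing, is a two-tier scheme. Tier one: test with the \emph{horizontal} Laplacian $-\Delta_hu$. For this test function the nonlinearity does fully reduce, via incompressibility and integration by parts, to $C\int_{\mathbb{R}^3}|u_3|\,|\nabla u|\,|\nabla_h\nabla u|\,dx$ (see \eqref{3.6a}): the dangerous purely horizontal block $\sum_{i,j=1}^{2}\int u_i\partial_iu_j\Delta_hu_j\,dx$ is exactly the one where the two-dimensional divergence structure forces every term to carry $\partial_3u_3$, and one more integration by parts in $x_3$ converts $\partial_3u_3$ into $u_3$ times a second derivative lying in $\nabla_h\nabla u$. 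Then Lemma \ref{l2.2} is applied with $\phi=u_3$, $f=\nabla u$, $g=\nabla_h\nabla u$ and the parameters \eqref{3.19}--\eqref{3.23}, chosen so that the two norms produced are exactly the anisotropic $\partial_3u_3$ norm of \eqref{z2} and $\|u_3\|_{L^s}$; note that Lemma \ref{l2.2} only asks for $\partial_1f,\partial_2f$ on the right-hand side, precisely because in this tier the available dissipation is $\|\nabla_h\nabla u\|_{L^2}^2$, not $\|\Delta u\|_{L^2}^2$. After Young's inequality and absorption this yields \eqref{3.29}, a bound on $\|\nabla_hu\|_{L^2}^2+\nu\int_0^t\|\nabla_h\nabla u\|_{L^2}^2\,d\tau$ in terms of $\int_0^t\bigl\|\|\partial_3u_3\|_{L^\alpha_{x_3}}\bigr\|_{L^\beta_{x_1,x_2}}^{3p/4}\|\nabla u\|_{L^2}^2\,d\tau$. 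Tier two: only now test with $-\Delta u$, bound $|I|\le C\|\nabla_hu\|_{L^2}\|\nabla u\|_{L^2}^{1/2}\|\nabla_h\nabla u\|_{L^2}\|\Delta u\|_{L^2}^{1/2}$ as in \eqref{3.11}, and combine with tier one (see \eqref{3.30}--\eqref{3.32}) to arrive at a Gronwall inequality for $\|\nabla u\|_{L^2}^2$ whose weight $\bigl\|\|\partial_3u_3\|_{L^\alpha_{x_3}}\bigr\|_{L^\beta_{x_1,x_2}}^{p}$ lies in $L^1(0,T)$ by \eqref{z2}. Your endgame (Gronwall with an $L^1$ weight coming from \eqref{z2}) and your scaling observations at the endpoints of \eqref{5a1} agree with the paper, but without the intermediate $\nabla_hu$ estimate there is no way to dominate the irreducible horizontal terms, so the proof as proposed does not go through.
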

\begin{rem}\label{r1.3}  We note that $\alpha,\beta$ and $ p$ in
Theorem \ref{t1.3} satisfy $1/\alpha+2/\beta+2/p=2.$ This means
when we assume $s>3$, we can give a  scaling invariant condition on
$\partial_3u_3$. 
\end{rem}


 In following theorem, we give the assumption on $u_3$ and $\partial_{3}u_{3}$
 with the time integrability.

\begin{thm}\label{t1.4}
 Suppose that $u_{0}\in V$, and
$u$ is a Leray-Hopf weak solution to the 3D Navier-Stokes equations
\eqref{a}.
 Suppose  $u$ satisfies  the following conditions
 \begin{eqnarray}\label{5a4} \displaystyle\int_0^T\|
u_{3}(\tau)\| _{L^{s}}^{q}d\tau\leq M, \ \mbox{for some} \ M>0,
\end{eqnarray}
 and\begin{eqnarray}\label{z3}
\displaystyle\int_0^T\left\|\|
\partial_{3}u_{3}(\tau)\|_{L^{\alpha}_{x_3}}\right\|
_{L^{\beta}_{x_1,x_2}}^{p}d\tau\leq M, \ \mbox{for some} \ M>0,
\end{eqnarray} where $s$ and $q$, $\alpha$, $\beta$ and $p$ satisfy
\begin{eqnarray}\label{z4} \displaystyle
\frac{3}{s}+\frac{2}{q}<1;\ \
\frac{1}{\alpha}+\frac{2}{\beta}+\frac{2}{p}=2,
\end{eqnarray}
and
\begin{eqnarray}\label{z5} \displaystyle
\frac{3}{2}<\beta< 2, \ \frac{\beta}{2\beta-2}<\alpha\leq\beta,\
\frac{11\alpha\beta-10\beta-2\alpha}{3(\alpha-1)\beta}\leq s\leq
\infty, 
\end{eqnarray}
Then $u$ is regular.
\end{thm}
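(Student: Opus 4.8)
The plan is to produce an a priori bound on $\|\nabla u(t)\|_{L^2}$ that is uniform on $[0,T]$. Since $u_0\in V$ yields a unique local strong solution on a maximal interval $[0,T^{*})$ whose blow-up is detected by $\|\nabla u\|_{L^2}$, such a bound forces $T^{*}\geq T$ and gives regularity. To this end I would test the momentum equation in \eqref{a} with $-\Delta u$ and integrate over $\mathbb{R}^3$, so that the pressure term and the time derivative produce the energy identity
$$\frac{1}{2}\frac{d}{dt}\|\nabla u\|_{L^2}^2+\nu\|\Delta u\|_{L^2}^2=\int_{\mathbb{R}^3}(u\cdot\nabla)u\cdot\Delta u\,dx=:I.$$
The whole argument then reduces to estimating $I$ so that the controlled norms of $u_3$ and $\partial_3u_3$ appear and the dissipation $\nu\|\Delta u\|_{L^2}^2$ can be partly absorbed on the left.

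The core step is a decomposition of $I$ isolating the two controlled quantities. After one integration by parts, $I=-\sum_{i,j,k}\int_{\mathbb{R}^3}\partial_k u_i\,\partial_i u_j\,\partial_k u_j\,dx$, the remaining convective piece vanishing by $\nabla\cdot u=0$. I would then regroup this sum and repeatedly invoke the divergence-free relation $\partial_3u_3=-\partial_1u_1-\partial_2u_2$, so that the dangerous summands are controlled through a factor $u_3$ or a factor $\partial_3u_3$; this is the mechanism through which a single-component hypothesis enters and confines the otherwise uncontrolled horizontal derivatives. The summands carrying $u_3$ are treated by \eqref{5a4}: Hölder in space together with Gagliardo--Nirenberg interpolation bounds them by $C\|u_3\|_{L^s}^{a}\|\nabla u\|_{L^2}^{b}\|\Delta u\|_{L^2}^{c}$ with $c<2$. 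The summands carrying $\partial_3u_3$ are exactly where the anisotropic Lemma \ref{l2.2} enters, upgrading the isotropic estimate of \cite{[2]} to the mixed norm of \eqref{z3} and producing a bound of the form $C\,\big\|\,\|\partial_3u_3\|_{L^{\alpha}_{x_3}}\big\|_{L^{\beta}_{x_1,x_2}}^{\theta}\|\nabla u\|_{L^2}^{\mu}\|\Delta u\|_{L^2}^{\eta}$ with $\eta<2$.

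With these two estimates in hand I would apply Young's inequality to each, splitting off a small multiple of $\|\Delta u\|_{L^2}^2$ (absorbed on the left) and a product of $\|\nabla u\|_{L^2}^2$ with a time-dependent coefficient. The index conditions are precisely what make this admissible: the scaling identity $\tfrac1\alpha+\tfrac2\beta+\tfrac2p=2$ forces the $\partial_3u_3$-factor to enter with the exponent $p$ of \eqref{z3}, while $\tfrac32<\beta<2$, $\tfrac{\beta}{2\beta-2}<\alpha\leq\beta$ and the lower bound on $s$ in \eqref{z5} keep the interpolation exponents in their valid ranges and keep $\eta<2$. For the $u_3$ terms the strict inequality $\tfrac3s+\tfrac2q<1$ provides the slack needed, after absorbing $\|\Delta u\|_{L^2}^2$, to convert the $L^q_t$ integrability of \eqref{5a4} into an $L^1_t$ coefficient via Hölder in time. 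The upshot is a differential inequality
$$\frac{d}{dt}\|\nabla u\|_{L^2}^2+\nu\|\Delta u\|_{L^2}^2\leq C\,\Phi(t)\,\|\nabla u\|_{L^2}^2,\qquad \Phi\in L^1(0,T),$$
where $\Phi$ collects the corresponding powers of $\|u_3\|_{L^s}$ and of $\big\|\,\|\partial_3u_3\|_{L^{\alpha}_{x_3}}\big\|_{L^{\beta}_{x_1,x_2}}$; Grönwall's lemma then delivers the uniform bound on $\|\nabla u(t)\|_{L^2}$.

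The main obstacle I expect is the exponent bookkeeping in the last two steps: one must check that the single scaling identity together with the range restrictions \eqref{z5} simultaneously (a) make the interpolation behind Lemma \ref{l2.2} legitimate, (b) keep the power of $\|\Delta u\|_{L^2}$ in every term strictly below $2$ so that Young's inequality can absorb it, and (c) leave the residual time factors integrable to the correct exponents. Carrying the anisotropic $(\alpha,\beta)$ estimate and the $L^q_tL^s_x$ control of $u_3$ through a single differential inequality, rather than separately as in Theorem \ref{t1.3}, is the delicate part, and it is essentially what dictates the precise shape of the admissible region \eqref{z5}.
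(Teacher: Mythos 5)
Your overall strategy (local strong solution, a priori bound on $\|\nabla u\|_{L^2}$, absorb the dissipation, Gronwall) matches the paper's, but the central step --- your decomposition of the trilinear term --- does not work, and this is a genuine gap. After integrating by parts, $\int_{\mathbb{R}^3}(u\cdot\nabla)u\cdot\Delta u\,dx=-\sum_{i,j,k}\int\partial_ku_i\,\partial_iu_j\,\partial_ku_j\,dx$, and it is not possible to regroup this sum, even invoking $\partial_1u_1+\partial_2u_2=-\partial_3u_3$ repeatedly, so that every dangerous summand carries a factor $u_3$ or $\partial_3u_3$: summands such as $\int\partial_1u_2\,\partial_2u_1\,\partial_1u_1\,dx$ involve only horizontal components and horizontal derivatives, and the divergence relation only lets you trade the specific combination $\partial_1u_1+\partial_2u_2$ for $-\partial_3u_3$, which does not occur as a factor there. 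What the known structure actually yields for the full-Laplacian test (the paper's terms $L_1,L_2,L_3$) is the weaker bound $C\int|\nabla_hu||\nabla u|^2\,dx$, in which the full horizontal gradient $\nabla_hu$ of all three components appears --- a quantity not controlled by the hypotheses \eqref{5a4} and \eqref{z3}. If a regrouping of the kind you describe existed, critical Prodi--Serrin conditions on $u_3$ alone would follow at once; the absence of such a decomposition is precisely the structural obstruction this whole line of research works around.

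The paper closes the argument with a two-tier scheme that your proposal is missing. It first tests with the horizontal Laplacian $-\Delta_hu$; for that test the Cao--Titi-type integrations by parts do produce the bound $C\int|u_3||\nabla u||\nabla_h\nabla u|\,dx$ (every term can be rewritten with an explicit factor $u_3$), and this is exactly where Lemma \ref{l2.2} with $\phi=u_3$ enters, generating simultaneously the mixed norm of \eqref{z3} and the $L^s$ norm of \eqref{5a4}; Young's inequality then gives $\|\nabla_hu(t)\|_{L^2}^2+\nu\int_0^t\|\nabla_h\nabla u\|_{L^2}^2\,d\tau\leq\|\nabla_hu_0\|_{L^2}^2+C\int_0^t\Phi(\tau)\|\nabla u\|_{L^2}^2\,d\tau$, which is \eqref{3.45}. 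Only afterwards does the paper test with $-\Delta u$, bound the nonlinearity by $C\|\nabla_hu\|_{L^2}\|\nabla u\|_{L^2}^{1/2}\|\nabla_h\nabla u\|_{L^2}\|\Delta u\|_{L^2}^{1/2}$, and insert the first-step estimate; Hölder and Young in time, the $\epsilon$-parameter choice in \eqref{3.41}--\eqref{3.41a} (which is where the strict inequality $3/s+2/q<1$ supplies the needed slack), and Gronwall then give \eqref{3.53}. Without the intermediate control of $\|\nabla_hu\|_{L^2}$ and $\int\|\nabla_h\nabla u\|_{L^2}^2$, the single differential inequality you aim for cannot be derived from the stated hypotheses.
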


\begin{thm}\label{t1.5}
 Suppose that $u_{0}\in V$, and
$u$ is a Leray-Hopf weak solution to the 3D Navier-Stokes equations
\eqref{a}.
 Suppose  $u$ satisfies  the following conditions
 \begin{eqnarray}\label{5a5} \displaystyle\int_0^T\|
u_{3}(\tau)\| _{L^{s}}^{q}d\tau\leq M, \ \mbox{for some} \ M>0,
\end{eqnarray}
 and\begin{eqnarray}\label{z6}
\displaystyle\int_0^T\left\|\|
\partial_{3}u_{3}(\tau)\|_{L^{\alpha}_{x_3}}\right\|
_{L^{\beta}_{x_1,x_2}}^{p}d\tau\leq M, \ \mbox{for some} \ M>0,
\end{eqnarray} where $s$ and $q$, $\alpha$, $\beta$ and $p$ satisfy
\begin{eqnarray}\label{z7} \displaystyle
\frac{3}{s}+\frac{2}{q}=1;\ \
\frac{1}{\alpha}+\frac{2}{\beta}+\frac{2}{p}<2,
\end{eqnarray}
and
\begin{eqnarray}\label{z8} \displaystyle
\frac{3}{2}\leq\beta\leq 2, \ \
\frac{\beta}{2\beta-2}<\alpha\leq\beta,\ 3\leq s\leq
\frac{9\alpha\beta-6\beta-6\alpha}{(\alpha-1)\beta}, 
\end{eqnarray}
Then $u$ is regular.
\end{thm}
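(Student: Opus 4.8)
The plan is to derive an a priori bound on $\sup_{0\le t<T}\|\nabla u(t)\|_{L^2}^2+\nu\int_0^T\|\Delta u(\tau)\|_{L^2}^2\,d\tau$; by the standard theory recalled in the Introduction this upgrades the Leray--Hopf solution to a strong, hence regular, solution on $(0,T)$. First I take the $L^2$ inner product of the momentum equation in \eqref{a} with $-\Delta u$. Since $\nabla\cdot u=0$, the pressure term drops out after integration by parts and the time and dissipation terms take their usual form, leaving
\begin{equation*}
\frac12\frac{d}{dt}\|\nabla u\|_{L^2}^2+\nu\|\Delta u\|_{L^2}^2=-\int_{\mathbb{R}^3}(u\cdot\nabla)u\cdot\Delta u\,dx=:I.
\end{equation*}
Integrating by parts in $x_k$ and discarding the contribution that vanishes by incompressibility, I rewrite $I=\sum_{i,j,k=1}^3\int_{\mathbb{R}^3}\partial_k u_i\,\partial_i u_j\,\partial_k u_j\,dx$.

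The core of the argument is to reorganise $I$ so that the only quantities that must be controlled by the hypotheses are $u_3$ and $\partial_3u_3$. Using the divergence-free relation $\partial_3u_3=-(\partial_1u_1+\partial_2u_2)$ to trade the purely horizontal derivatives, together with integration by parts that shifts a derivative onto the vertical component, I expect to bound $|I|$ by a sum of terms in which $\partial_3u_3$, respectively $u_3$, appears as a distinguished factor, schematically
\begin{equation*}
|I|\le C\int_{\mathbb{R}^3}|\partial_3u_3|\,|\nabla u|^2\,dx+C\int_{\mathbb{R}^3}|u_3|\,|\nabla u|\,|\nabla^2u|\,dx.
\end{equation*}
This is the same mechanism that drives Theorem \ref{t1.4}, with the roles of the two factors now weighted according to \eqref{z7}.

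For the first integral I invoke the anisotropic interpolation inequality of Lemma \ref{l2.2}, which estimates $\int_{\mathbb{R}^3}|\partial_3u_3|\,|\nabla u|^2\,dx$ by $\big\|\|\partial_3u_3\|_{L^\alpha_{x_3}}\big\|_{L^\beta_{x_1,x_2}}$ times a product of powers of $\|\nabla u\|_{L^2}$ and $\|\Delta u\|_{L^2}$; the admissibility of this interpolation is exactly what the constraints $\frac32\le\beta\le2$ and $\frac{\beta}{2\beta-2}<\alpha\le\beta$ in \eqref{z8} encode. For the second integral I use H\"older's inequality with exponent $s$ on $u_3$ followed by the Gagliardo--Nirenberg inequality, which turns the remaining factors into powers of $\|\nabla u\|_{L^2}$ and $\|\Delta u\|_{L^2}$ tied to $\frac3s$. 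Applying Young's inequality to each, I absorb a small multiple of $\|\Delta u\|_{L^2}^2$ into the dissipation and am left with
\begin{equation*}
\frac{d}{dt}\|\nabla u\|_{L^2}^2+\nu\|\Delta u\|_{L^2}^2\le C\Big\|\|\partial_3u_3\|_{L^\alpha_{x_3}}\Big\|_{L^\beta_{x_1,x_2}}^{p}\|\nabla u\|_{L^2}^2+C\|u_3\|_{L^s}^{q}\|\nabla u\|_{L^2}^2,
\end{equation*}
where the exponents $p$ and $q$ come out precisely as in \eqref{z6} and \eqref{5a5} once the algebraic relations among $(\alpha,\beta,p)$ and $(s,q)$ are used; the strict inequality $\frac1\alpha+\frac2\beta+\frac2p<2$ guarantees that the first coefficient is genuinely integrable in time with room to spare.

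Both coefficients have finite time integral on $(0,T)$ by \eqref{z6} and \eqref{5a5}, so for $s>3$ Gronwall's inequality closes the estimate immediately and yields the sought bound on $\|\nabla u\|_{L^2}$. The main obstacle is the critical scaling $\frac3s+\frac2q=1$ at the endpoint $s=3$ (where $q=\infty$): there the $u_3$-term carries the full power $\|\Delta u\|_{L^2}^2$, so its coefficient cannot simply be absorbed into the dissipation, and I would instead lean on the additional control furnished by the subcritical $\partial_3u_3$-term to lower the effective order of the $u_3$-term, supplemented if necessary by a smallness argument that chops $[0,T)$ into finitely many short subintervals on which the critical part of $u_3$ is beaten by the recovered dissipation, iterating the local bound across the pieces. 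Verifying that the anisotropic estimate of Lemma \ref{l2.2} indeed produces exponents lying in the admissible range \eqref{z8}, and that the two forcing terms can be balanced simultaneously against a single dissipation budget, is the delicate bookkeeping at the heart of the proof.
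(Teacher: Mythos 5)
Your overall strategy (bound the $H^1$ norm on the maximal existence interval, then conclude regularity) matches the paper, but the central step of your argument has a genuine gap: the claimed bound
\begin{equation*}
|I|\le C\int_{\mathbb{R}^3}|\partial_3u_3|\,|\nabla u|^2\,dx+C\int_{\mathbb{R}^3}|u_3|\,|\nabla u|\,|\nabla^2u|\,dx,
\qquad I=\sum_{i,j,k=1}^3\int_{\mathbb{R}^3}\partial_ku_i\,\partial_iu_j\,\partial_ku_j\,dx,
\end{equation*}
is not obtainable by integration by parts and incompressibility, and is almost certainly false. Consider the term with $k=3$, $i=1$, $j=2$, namely $\int_{\mathbb{R}^3}\partial_3u_1\,\partial_1u_2\,\partial_3u_2\,dx$: it involves only $u_1$ and $u_2$; integration by parts can only redistribute derivatives among $u_1,u_2$, and the divergence-free condition converts only the specific combination $\partial_1u_1+\partial_2u_2$ into $-\partial_3u_3$ --- it never reaches off-diagonal entries such as $\partial_1u_2$, nor diagonal entries paired with non-matching quadratic factors (e.g.\ $\int(\partial_3u_1)^2\partial_1u_1\,dx$). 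This obstruction is precisely why the one-component literature, and this paper, do \emph{not} test directly with $-\Delta u$. If your schematic inequality were true, one-component criteria at full Prodi--Serrin scaling would follow readily, contradicting the gap the paper itself emphasizes in Remark \ref{r1.1}.

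The paper's actual proof is two-tiered. First it tests with $-\Delta_h u$: the purely horizontal algebra of $\sum_{i,j=1,2}\int u_i\partial_iu_j\Delta_hu_j\,dx$, combined with incompressibility and an integration by parts in $x_3$, bounds \emph{all} nonlinear contributions by $C\int|u_3||\nabla u||\nabla_h\nabla u|\,dx$. Lemma \ref{l2.2} is then applied once, with $\phi=u_3$, to this single trilinear term, and it is the lemma itself that simultaneously produces \emph{both} factors $\bigl\|\|\partial_3u_3\|_{L^\alpha_{x_3}}\bigr\|_{L^\beta_{x_1,x_2}}$ and $\|u_3\|_{L^s}$; your proposal misreads the lemma as applying to a $\partial_3u_3$-weighted integral. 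This first tier controls $\|\nabla_hu\|_{L^2}$ and $\int\|\nabla_h\nabla u\|_{L^2}^2\,d\tau$. Only then does the paper test with $-\Delta u$, bounding the nonlinearity by $C\int|\nabla_hu||\nabla u|^2\,dx\le C\|\nabla_hu\|_{L^2}\|\nabla u\|_{L^2}^{1/2}\|\nabla_h\nabla u\|_{L^2}\|\Delta u\|_{L^2}^{1/2}$ (no $u_3$ needed at this stage), and closes via the first-tier bounds, a Young splitting with conjugate exponents $h,h'$ tailored to an $\epsilon$-perturbed choice of parameters, and Gronwall. Incidentally, your fallback at the endpoint $s=3$, $q=\infty$ (smallness on short subintervals) would not work either: an $L^\infty_tL^3_x$ norm does not shrink on short time intervals; the natural move is to pull $\sup_t\|u_3\|_{L^3}$ into the Gronwall constant.
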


\begin{rem}\label{r1.4}

Very recently, J. Y. Chemin and P. Zhang in \cite{[42]} considered
the sufficient additional condition in homogeneity Sobolev spaces
rather than Lebesgue spaces, and got the regularity criterion
involving only one component of velocity
\begin{eqnarray}\label{po}
u_3 \in L^{\gamma}(0,T;
\dot{H}^{\sigma}(\mathbb{R}^3)),\
\gamma\in]4,6[.
\end{eqnarray} The derivative on $u_3$ is order of $\sigma=\frac{1}{2}+\frac{2}{\gamma}\in ]\frac 56, 1[$, and the
embedding
$\dot{H}^{\frac{1}{2}+\frac{2}{\gamma}}(\mathbb{R}^3)\hookrightarrow
L^{\eta}(\mathbb{R}^3)$ implies the range of
$\eta=\frac{3\gamma}{\gamma-2}$ is $]\frac{9}{2},6[$.  We find there
are some interesting inspirations between Theorem \ref{t1.4} or
Theorem \ref{t1.5} and \eqref{po}. In Theorem \ref{t1.4}, we see
that the condition on $\partial_3u_3$ is critical, that is scaling
invariant, the range of $p\in]4,\infty]$ is larger than the one of
$\gamma \in]4,6[$ in \eqref{po}, the condition \eqref{5a4} is weaker
than that in \eqref{po} because of $L^{\gamma}(0,T;
\dot{H}^{\sigma}(\mathbb{R}^3)) \hookrightarrow L^\gamma (0,T;
L^\eta(\mathbb{R}^3))$. We see that the indexes of $u_3$ in
\eqref{5a4} is of ``quasi-PS type", and the range of $(s, q)$ is
larger than $(\gamma,\eta)$ in \eqref{po}, and in particular, the
range of $q$ can be enlarged to $]3,\infty]$, since for any
$\epsilon>0$, there exist $\alpha, \beta$ satisfying \eqref{z5} such
that $0<\frac{11\alpha\beta-10\beta-2\alpha}{3(\alpha-1)\beta}-
3<\epsilon.$  While Theorem \ref{t1.5} gives another depiction on
the regularity criterion on $u_3$, in which we assume the indexes on
$u_3$ is critical and that on $\partial_3u_3$ is of quasi-PS type
and range of the indexes are correspondingly expanded. Moreover,
\label{r1.5} Theorem \ref{t1.4} and \ref{t1.5} also generalize the
results of \cite{[31]}. We see that the author in \cite{[31]}
considered the special case  of the  Theorem \ref{t1.4} and
\ref{t1.5} with $\alpha=\beta$.
\end{rem}

\par For the convenience, we recall the following version of
the three-dimensional  Sobolev and Ladyzhenskaya inequalities in the
whole space $\mathbb{R}^{3}$ (see, for example, \cite{[5]},
\cite{[8]}, \cite{[11]}). There exists a positive constant $C$ such
that
\begin{equation}\label{i}
\begin{array}{ll}
 \displaystyle
\|u\|_{r}&\displaystyle
\leq C \|u\|_{2}^{\frac{6-r}{2r}}\|\partial_{1}u\|_{2}^{\frac{r-2}{2r}}\|\partial_{2}u\|_{2}^{\frac{r-2}{2r}}\|\partial_{3}u\|_{2}^{\frac{r-2}{2r}}\\
&\leq C \|u\|_{2}^{\frac{6-r}{2r}}\|\nabla
u\|_{2}^{\frac{3(r-2)}{2r}},
\end{array}
\end{equation}
for every $u\in H^{1}(\mathbb{R}^{3})$ and every $r\in[2,6]$, where
$C$ is a constant depending only on $r$.
\section{{Proof of Main Results}}
\label{}In this section, under the assumptions of the Theorem
\ref{t1.1}, Theorem \ref{t1.3}, Theorem \ref{t1.4} or Theorem
\ref{t1.5} in Section 1 respectively, we prove our main results.
First of all, by using the energy inequality, for Leray-Hopf weak
solutions, we have (see, for example, \cite{[21]}, \cite{[22]} for
detail)
\begin{equation}\label{3.1}
\|u(\cdot,t)\|_{L^{2}}^{2}+2\nu\int_{0}^{t}\|\nabla
u(\cdot,s)\|_{L^{2}}^{2}ds\leq K_{1},\end{equation} for all $0<
t<T,$ where $K_{1}=\|u_{0}\|_{L^{2}}^{2}.$
\par  It is  well
known that there exists a unique strong solution  $u$ local in time
if $u_{0}\in V$. In addition, this strong solution $u\in
C([0,T^{*});V)\cap L^{2}(0,T^{*}; H^{2}(\mathbb{R}^{3}))$ is the
only weak solution with the initial datum $u_{0}$, where $(0,T^{*})$
is the maximal interval of existence of the unique strong solution.
If $T^{*}\geq T,$ then there is nothing to prove. If, on the other
hand, $T^{*}< T,$ then our strategy is to show that the $H^{1}$ norm
of this strong solution is bounded uniformly in time over the
interval $(0,T^{*})$, provided  additional conditions in Theorem
\ref{t1.1},  Theorem \ref{t1.3}, Theorem \ref{t1.4} or Theorem
\ref{t1.5} in Section 1 are valid. As a result the interval
$(0,T^{*})$ cannot be a maximal interval of existence, and
consequently $T^{*}\geq T,$ which concludes our proof. \par In order
to prove the $H^{1}$ norm of the strong solution $u$ is bounded on
interval $(0,T^{*})$, combing with the energy equality \eqref{3.1},
it is sufficient to prove
\begin{equation} \label{3.2}
\|\nabla
u\|_{L^2}^{2}+\displaystyle{\nu}\displaystyle\int_{0}^{t}\|\Delta
u\|_{L^2}^{2}d\tau\leq C,\ \forall \ t\in(0, T^{*}),
\end{equation}
where the constant $C$ depends on $T$, $K_{1} $.
 Before we prove the main theorem, we show the
following lemma.
\begin{lem}\label{l2.2}
Let us assume that
\begin{equation}\label{2}
1\leq{{\alpha}},\beta,s,a,t\leq\infty,\  \ 2<r\leq\infty, \
\mbox{and}\ 0\leq\theta\leq1,\end{equation} where $\alpha, \beta,s,
r$ and $ \theta$ satisfy
\begin{equation}\label{3}
\frac{1}{a}+\frac{1}{t}=\frac{\beta-1}{\beta},\
\end{equation}
and \begin{equation}\label{3a}
\frac{1}{(r-1)\alpha}+\frac{\theta}{\alpha}=\frac{1-\theta}{s(\alpha-1)},
\end{equation} then we have the following estimates
\begin{equation}\label{4a}
\begin{array}{ll}
 \displaystyle\left|\int_{\mathbb{R}^3}\phi f g dx_1dx_2dx_3\right|
 &\displaystyle\leq
 C\left\|\|\partial_3\phi\|_{L^\alpha_{x_3}}\right\|_{L^\beta_{x_1,x_2}}^\frac{1}{r}
 \left\|\|\partial_3\phi\|_{L^\alpha_{x_3}}\right\|_{L^{\theta(r-1)t}_{x_1,x_2}}^{\frac{\theta(r-1)}{r}}
 \left\|\|\phi\|_{L^s_{x_3}}\right\|_{L^{(1-\theta)(r-1)a}_{x_1,x_2}}^{\frac{(1-\theta)(r-1)}{r}}
 \\
 &\displaystyle\ \ \ \ \times\|f
\|_{L^2}^{\frac{r-2}{r}}\|\partial_1f
\|_{L^2}^{\frac{1}{r}}\|\partial_2f
\|_{L^2}^{\frac{1}{r}}\|g\|_{L^2}.
\end{array}
\end{equation}
\end{lem}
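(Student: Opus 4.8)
The plan is to collapse the three–dimensional integral onto the horizontal plane by first integrating in $x_3$, placing $\phi$ in $L^\infty_{x_3}$ and $f,g$ in $L^2_{x_3}$, and then running a two–dimensional Hölder argument in $(x_1,x_2)$. For fixed $(x_1,x_2)$, Cauchy--Schwarz in $x_3$ gives $\int_{\mathbb R}|\phi f g|\,dx_3\le \|\phi\|_{L^\infty_{x_3}}\|f\|_{L^2_{x_3}}\|g\|_{L^2_{x_3}}$, and since $\frac1r+\frac{r-2}{2r}+\frac12=1$, Hölder in $(x_1,x_2)$ with the conjugate triple $\bigl(r,\tfrac{2r}{r-2},2\bigr)$ yields
\begin{equation*}
\left|\int_{\mathbb R^3}\phi f g\,dx\right|\le
\left\|\,\|\phi\|_{L^\infty_{x_3}}\,\right\|_{L^r_{x_1,x_2}}
\left\|\,\|f\|_{L^2_{x_3}}\,\right\|_{L^{\frac{2r}{r-2}}_{x_1,x_2}}\|g\|_{L^2}.
\end{equation*}
Thus the argument splits into controlling the $f$–factor and, more delicately, the $\phi$–factor.

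For the $f$–factor I would invoke the anisotropic two–dimensional Gagliardo--Nirenberg (Ladyzhenskaya) inequality $\|w\|_{L^p(\mathbb R^2)}\le C\|w\|_{L^2}^{2/p}\|\partial_1 w\|_{L^2}^{\frac{p-2}{2p}}\|\partial_2 w\|_{L^2}^{\frac{p-2}{2p}}$ with $w=\|f\|_{L^2_{x_3}}$ and $p=\frac{2r}{r-2}$, for which $\frac2p=\frac{r-2}{r}$ and $\frac{p-2}{2p}=\frac1r$. Because Minkowski's inequality (equivalently Cauchy--Schwarz in $x_3$) gives $\left\|\,\partial_i\|f\|_{L^2_{x_3}}\,\right\|_{L^2_{x_1,x_2}}\le\|\partial_i f\|_{L^2}$, this reproduces exactly the factor $\|f\|_{L^2}^{\frac{r-2}{r}}\|\partial_1 f\|_{L^2}^{\frac1r}\|\partial_2 f\|_{L^2}^{\frac1r}$ in \eqref{4a}.

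The heart of the matter, and the step I expect to be the main obstacle, is the estimate of $\left\|\,\|\phi\|_{L^\infty_{x_3}}\,\right\|_{L^r_{x_1,x_2}}$, where the hypotheses \eqref{3} and \eqref{3a} enter. For fixed $(x_1,x_2)$ the one–dimensional identity $|\phi(x_3)|^r=r\int_{-\infty}^{x_3}|\phi|^{r-1}\,\mathrm{sgn}(\phi)\,\partial_3\phi\,dy_3$ yields $\|\phi\|_{L^\infty_{x_3}}^r\le r\int_{\mathbb R}|\phi|^{r-1}|\partial_3\phi|\,dx_3$, and Hölder in $x_3$ with exponents $\alpha,\ \alpha'=\frac{\alpha}{\alpha-1}$ bounds this by $r\|\partial_3\phi\|_{L^\alpha_{x_3}}\|\phi\|_{L^{(r-1)\alpha'}_{x_3}}^{r-1}$. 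I then interpolate the vertical norm by the one–dimensional Gagliardo--Nirenberg inequality $\|\phi\|_{L^{(r-1)\alpha'}_{x_3}}\le C\|\partial_3\phi\|_{L^\alpha_{x_3}}^{\theta}\|\phi\|_{L^s_{x_3}}^{1-\theta}$, whose scaling relation $\frac{1}{(r-1)\alpha'}=\theta\bigl(\frac1\alpha-1\bigr)+\frac{1-\theta}{s}$ is, after clearing denominators, precisely the assumption \eqref{3a}. This leaves
\begin{equation*}
\left\|\,\|\phi\|_{L^\infty_{x_3}}\,\right\|_{L^r_{x_1,x_2}}^{r}\le C\int_{\mathbb R^2}\|\partial_3\phi\|_{L^\alpha_{x_3}}^{1+\theta(r-1)}\,\|\phi\|_{L^s_{x_3}}^{(1-\theta)(r-1)}\,dx_1dx_2.
\end{equation*}

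Finally I would write $\|\partial_3\phi\|_{L^\alpha_{x_3}}^{1+\theta(r-1)}=\|\partial_3\phi\|_{L^\alpha_{x_3}}\cdot\|\partial_3\phi\|_{L^\alpha_{x_3}}^{\theta(r-1)}$ and apply Hölder in $(x_1,x_2)$ to the three resulting factors with exponents $\beta,t,a$; the admissibility requirement $\frac1\beta+\frac1t+\frac1a=1$ is exactly the hypothesis \eqref{3}, namely $\frac1a+\frac1t=\frac{\beta-1}{\beta}$, and it produces the three horizontal norms $\left\|\,\|\partial_3\phi\|_{L^\alpha_{x_3}}\,\right\|_{L^\beta}$, $\left\|\,\|\partial_3\phi\|_{L^\alpha_{x_3}}\,\right\|_{L^{\theta(r-1)t}}^{\theta(r-1)}$ and $\left\|\,\|\phi\|_{L^s_{x_3}}\,\right\|_{L^{(1-\theta)(r-1)a}}^{(1-\theta)(r-1)}$. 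Taking $r$–th roots and combining with the $f$– and $g$–factors gives \eqref{4a}. The points that require care are the joint admissibility of all exponents (in particular $0\le\theta\le1$ and $2<r\le\infty$ must guarantee legitimate Hölder and interpolation exponents, with $r=\infty$ treated as a limiting case) and the justification of the decay needed in the one–dimensional fundamental theorem of calculus, which is harmless for the smooth, decaying $\phi$ arising from the strong solution.
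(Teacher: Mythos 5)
Your proposal is correct and follows essentially the same route as the paper's own proof: Cauchy--Schwarz in $x_3$, then H\"older in $(x_1,x_2)$ with exponents $\left(r,\tfrac{2r}{r-2},2\right)$, the fundamental-theorem-of-calculus bound for $\|\phi\|_{L^\infty_{x_3}}^r$, vertical H\"older with $(\alpha,\alpha')$, one-dimensional Gagliardo--Nirenberg interpolation whose scaling relation is exactly \eqref{3a}, the anisotropic Ladyzhenskaya treatment of the $f$-factor, and a final horizontal H\"older splitting governed by \eqref{3}. The only cosmetic differences are that you apply the two-dimensional Gagliardo--Nirenberg inequality directly to $\|f\|_{L^2_{x_3}}$ (using Minkowski for the derivatives) where the paper first swaps the order of the norms and argues slice-wise, and that you perform the final horizontal H\"older in a single three-exponent step instead of the paper's two successive two-exponent steps.
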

\begin{proof}
 Without loss of generality, we assume that the functions $\phi,f,g\in \mathcal {C}_{0}^
 {\infty}(\mathbb{R}^3)$, By using of Gagliardo-Nirenberg
and H$\ddot{\mbox{o}}$lder's inequalities,  we have
\begin{equation}\label{6}
\begin{array}{ll}
 &\displaystyle\left|\int_{\mathbb{R}^3}\phi f g dx_1dx_2dx_3\right|\\
  & \displaystyle \leq C\int_{\mathbb{R}^{2}}\left[\max_{x_{3}}|\phi|
  \left(\int_{\mathbb{R}}|f|^{2}dx_{3}\right)^{\frac{1}{2}}
  \left(\int_{\mathbb{R}}|g|^{2}dx_{3}\right)^{\frac{1}{2}} \right]dx_{1}dx_{2}\
 \displaystyle \vspace{1mm}\\
 \displaystyle & \leq\displaystyle
 C\left[\int_{\mathbb{R}^{2}}(\max_{x_{3}}|\phi|)^{r}dx_{1}dx_{2}\right]^{\frac{1}{r}}
 \left[\int_{\mathbb{R}^{2}}\left(\int_{\mathbb{R}}|f|^{2}dx_{3}\right)^{\frac{r}{r-2}}dx_{1}dx_{2}\right]^{\frac{r-2}{2r}}\\
 \displaystyle& \ \ \ \ \ \ \times \displaystyle\left[\int_{\mathbb{R}^{3}}
 |g|^{2}dx_{1}dx_{2}dx_{3}\right]^{\frac{1}{2}}
 \displaystyle \vspace{1mm}\\
 \displaystyle & \leq\displaystyle
 C\left[\int_{\mathbb{R}^{3}}|\phi|^{r-1}|\partial_{3}\phi|dx_{1}dx_{2}dx_{3}\right]^{\frac{1}{r}}
 \left[\int_{\mathbb{R}}\left(\int_{\mathbb{R}^{2}}|f|^{\frac{2r}{r-2}}dx_{1}dx_{2}\right)^{\frac{r-2}{r}}dx_{3}\right]^{\frac{1}{2}}
 \|g\|_{L^2}\vspace{2mm}\\
\displaystyle &\leq \displaystyle C
\left\|\|\partial_3\phi\|_{L^\alpha_{x_3}}\right\|_{L^\beta_{x_1,x_2}}^{\frac{1}{r}}
 \left\|\left\|\phi\right\|^{r-1}_{L^\frac{\alpha(r-1)}{\alpha-1}_{x_3}}\right\|
 _{L^{\frac{\beta}{\beta-1}}_{x_1,x_2}}^{\frac{1}{r}}
 \|f\|_{L^2}^{\frac{r-2}{r}}
 \|\partial_{1}f\|_{L^2}^{\frac{1}{r}} \|\partial_{2}f\|_{L^2}^{\frac{1}{r}}
\|g\|_{L^2}\vspace{1mm}\\
\displaystyle &\leq \displaystyle C
\left\|\|\partial_3\phi\|_{L^\alpha_{x_3}}\right\|_{L^\beta_{x_1,x_2}}^{\frac{1}{r}}
 \left\|\|\partial_3\phi\|_{L^\alpha_{x_3}}^{\theta(r-1)}
 \|\phi\|_{L^s_{x_3}}^{(1-\theta)(r-1)}\right\|_{L^{\frac{\beta}{\beta-1}}_{x_1,x_2}}^{\frac{1}{r}}
\vspace{1mm}\\
\displaystyle & \ \ \ \ \ \ \times\|f\|_{L^2}^{\frac{r-2}{r}}
 \|\partial_{1}f\|_{L^2}^{\frac{1}{r}} \|\partial_{2}f\|_{L^2}^{\frac{1}{r}}
\|g\|_{L^2}\vspace{2mm}\\
\displaystyle &\leq \displaystyle C
\left\|\|\partial_3\phi\|_{L^\alpha_{x_3}}\right\|_{L^\beta_{x_1,x_2}}^{\frac{1}{r}}
 \left\|\|\partial_3\phi\|_{L^\alpha_{x_3}}\right\|_{L^{\theta(r-1)t}_{x_1,x_2}}^{\frac{\theta(r-1)}{r}}
 \left\|\|\phi\|_{L^s_{x_3}}\right\|_{L^{(1-\theta)(r-1)a}_{x_1,x_2}}^{\frac{(1-\theta)(r-1)}{r}}
\vspace{1mm}\\
\displaystyle & \ \ \ \ \ \ \times\|f\|_{L^2}^{\frac{r-2}{r}}
 \|\partial_{1}f\|_{L^2}^{\frac{1}{r}} \|\partial_{2}f\|_{L^2}^{\frac{1}{r}}
\|g\|_{L^2}.
\end{array}
\end{equation}
in above inequalities, we  used \eqref{2} and \eqref{3}. The proof
is completed.\end{proof}

 \begin{proof}[Proof
of Theorem 1.1]
We first prove $(i)$. For the $\alpha, \beta$, we set $s=2$,
\begin{equation}\label{3.3}
r=\frac{\beta(3\alpha-2)}{\alpha(\beta+1)-\beta},\
\end{equation}
and
\begin{equation}\label{3.4}
\theta=\frac{\beta-\alpha}{2\alpha\beta-\alpha-\beta}.\
\end{equation}
then such  $s,r$ and $\theta$ satisfy \eqref{3}. We select that
\begin{equation}\label{3.5}
a=\frac{\alpha\beta+\alpha-\beta}{\alpha\beta-\beta}, \
t=\frac{\beta(\alpha\beta+\alpha-\beta)}{\beta-\alpha},
\end{equation}
then the selected $a$ and $t$ satisfy \eqref{3a}. Because of  $$
r-2=\frac{\alpha(\beta-2)}{\alpha\beta+\alpha-\beta},
$$it is easy to check that \eqref{2} is also satisfied by
\eqref{f} and \eqref{3.5}. Furthermore, we see that
\begin{equation}\label{3.6aa}
(1-\theta)(r-1)a=s=2,\ \ \theta(r-1)t=\beta.
\end{equation}
Therefore, taking the inner product of the equation \eqref{a} with
$-\Delta_{h}u$ in $L^{2}$,  we obtain
\begin{equation}\label{3.6aaa}
\begin{array}{ll}
 &\displaystyle \frac{1}{2}\frac{d}{dt}\|\nabla_{h}u\|_{L^2}^{2}+\nu\|\nabla_{h}\nabla u\|_{L^2}^{2} =\displaystyle
 \int_{\mathbb{R}^{3}}[(u\cdot \nabla)u]\Delta_{h}u dx\displaystyle \\
 \displaystyle  &\ \ \ \ \ \ \ \ \ \ \ \ \ \ = \displaystyle
\sum_{i,j=1}^{3}\int_{\mathbb{R}^{3}}u_{i}\partial_{i}u_{j}\Delta_{h}u_{j} dx\displaystyle\\
&\ \ \ \ \ \ \ \ \ \ \ \ \ \
\displaystyle=\sum_{i,j=1}^{2}\int_{\mathbb{R}^3}u_i\partial_iu_j\Delta_{h}u_jdx+
\sum_{i=1}^{2}\int_{\mathbb{R}^3}u_i\partial_iu_3\Delta_{h}
u_3dx\displaystyle\\
&\ \ \ \ \ \ \ \ \ \ \ \ \ \ \ \ \
\displaystyle+\sum_{j=1}^{2}\int_{\mathbb{R}^3}u_3\partial_3u_j\Delta_{h}u_jdx+
\int_{\mathbb{R}^3}u_3\partial_3u_3\Delta_{h}u_3dx\\
&\ \ \ \ \ \ \ \ \ \ \ \ \ \ =J_{1}(t)+J_{2}(t)+J_{3}(t)+J_4(t).
\end{array}\end{equation}
By integrating by parts a few times and using the incompressibility
condition, we get $J_{1}(t), J_{2}(t)$ as follows
$$
\begin{array}{ll}\displaystyle J_{1}(t)&\displaystyle=\frac{1}{2}\sum_{i,j=1}^{2}
\int_{\mathbb{R}^3}\partial_iu_j\partial_iu_j\partial_3u_3dx-
\int_{\mathbb{R}^3}\partial_1u_1\partial_{2}u_2\partial_3u_3dx
-\int_{\mathbb{R}^3}\partial_1u_2\partial_{2}u_1\partial_3u_3dx\end{array}$$
$$
\begin{array}{ll}\
\displaystyle
J_{2}(t)&\displaystyle=-\sum_{i,k=1}^{2}\int_{\mathbb{R}^3}\partial_ku_i\partial_iu_3\partial_ku_3dx-
\sum_{i,k=1}^{2}\int_{\mathbb{R}^3}u_i\partial_{ik}u_3\partial_ku_3dx\\
&\displaystyle=-\sum_{i,k=1}^{2}\int_{\mathbb{R}^3}\partial_ku_i\partial_iu_3\partial_ku_3dx+\frac{1}{2}
\sum_{i,k=1}^{2}\int_{\mathbb{R}^3}\partial_iu_i\partial_{k}u_3\partial_ku_3dx\\
&\displaystyle=-\sum_{i,k=1}^{2}\int_{\mathbb{R}^3}\partial_ku_i\partial_iu_3\partial_ku_3dx-\frac{1}{2}
\sum_{k=1}^{2}\int_{\mathbb{R}^3}\partial_3u_3\partial_{k}u_3\partial_ku_3dx.
\end{array}$$
From $J_{1}(t), J_{2}(t), J_{3}(t), J_{4}(t)$ it follows that
\begin{equation}\label{3.6a}
\begin{array}{ll}
 \displaystyle \frac{1}{2}\frac{d}{dt}\|\nabla_{h}u\|_{L^2}^{2}+\nu\|\nabla_{h}\nabla u\|_{L^2}^{2}
 \displaystyle \leq\displaystyle
 C\int_{\mathbb{R}^{3}}|u_{3}||\nabla u||\nabla_{h}\nabla u|dx \displaystyle
\end{array}\end{equation}
Applying Lemma \ref{l2.2} with the parameters  $r, \theta, a, t$ in
\eqref{3.3}, \eqref{3.4} and  \eqref{3.5} respectively, we have

\begin{equation}\label{3.6}
\begin{array}{ll}
 &\displaystyle \frac{1}{2}\frac{d}{dt}\|\nabla_{h}u\|_{L^2}^{2}+\nu\|\nabla_{h}\nabla u\|_{L^2}^{2}\\
 \displaystyle &\leq\displaystyle
 C\int_{\mathbb{R}^{3}}|u_{3}||\nabla u||\nabla_{h}\nabla u|dx \displaystyle \\
 \displaystyle &\leq\displaystyle
 C\left\|\|\partial_3u_3\|_{L^{\alpha}_{x_3}}\right\|_{L^{\beta}_{x_1,x_2}}^{\frac{1}{r}}
 \left\|\|\partial_3u_3\|_{L^{\alpha}_{x_3}}\right\|_{L^{\theta(r-1)t}_{x_1,x_2}}^{\frac{\theta(r-1)}{r}}
 \left\|\|u_3\|_{L^s_{x_3}}\right\|_{L^{(1-\theta)(r-1)a}_{x_1,x_2}}^{\frac{(1-\theta)(r-1)}{r}}
 \vspace{1mm}\\
 &\displaystyle\ \ \ \ \times\|\nabla u
\|_{L^2}^{\frac{r-2}{r}}\|\partial_1\nabla u
\|_{L^2}^{\frac{1}{r}}\|\partial_2\nabla u
\|_{L^2}^{\frac{1}{r}}\|\nabla_{h}\nabla u\|_{L^2}\vspace{1mm}\\
\displaystyle &\leq\displaystyle C
 \left\|\|\partial_3u_3\|_{L^{\alpha}_{x_3}}\right\|_{L^{\beta}_{x_1,x_2}}^{\frac{1+\theta(r-1)}{r}}
 \left\|u_3\right\|_{L^{2}}^{\frac{(1-\theta)(r-1)}{r}}\ \ \mbox{by}\ \eqref{3.6aa}
 \vspace{1mm}\\
 &\displaystyle\ \ \ \ \times\|\nabla u
\|_{L^2}^{\frac{r-2}{r}}\|\partial_1\nabla u
\|_{L^2}^{\frac{1}{r}}\|\partial_2\nabla u
\|_{L^2}^{\frac{1}{r}}\|\nabla_{h}\nabla u\|_{L^2}\\
\displaystyle &\leq\displaystyle C
 \left\|\|\partial_3u_3\|_{L^{\alpha}_{x_3}}\right\|_{L^{\beta}_{x_1,x_2}}^{\frac{1+\theta(r-1)}{r}}
 \left\|u_3\right\|_{L^{2}}^{\frac{(1-\theta)(r-1)}{r}}
 \|\nabla u
\|_{L^2}^{\frac{r-2}{r}}\|\nabla_{h}\nabla
u\|_{L^2}^{\frac{r+2}{r}}.
\end{array}\end{equation}
Integrating \eqref{3.6} in time, applying Young's inequality and the
energy inequality \eqref{3.1}, we get
\begin{equation}\label{3.7}
\begin{array}{ll}
 &\displaystyle \|\nabla_{h}u(t)\|_{L^2}^{2}+2\nu\|\nabla_{h}\nabla u\|_{L^2}^{2}\\
&\displaystyle\leq\|\nabla_{h}u_0\|_{L^2}^{2}
   +C\int_0^t
 \left\|\|\partial_3u_3\|_{L^{\alpha}_{x_3}}\right\|_{L^{\beta}_{x_1,x_2}}^{\frac{1+\theta(r-1)}{r}}
 \left\|u_3\right\|_{L^{2}}^{\frac{(1-\theta)(r-1)}{r}}
 \|\nabla u
\|_{L^2}^{\frac{r-2}{r}}\|\nabla_{h}\nabla
u\|_{L^2}^{\frac{r+2}{r}}d\tau\\
&\displaystyle\leq\|\nabla_{h}u_0\|_{L^2}^{2}
   +C\int_0^t
 \left\|\|\partial_3u_3\|_{L^{\alpha}_{x_3}}\right\|_{L^{\beta}_{x_1,x_2}}^{\frac{2(1+\theta(r-1))}{r-2}}
 \|\nabla u
\|_{L^2}^{2}d\tau+\nu\int_0^t\|\nabla_{h}\nabla u\|_{L^2}^{2}d\tau.
\end{array}\end{equation}
Absorbing the last term in \eqref{3.7}, and using \eqref{z} and
\eqref{3.1}, we have

\begin{equation}\label{3.8}
\begin{array}{ll}
 &\displaystyle \|\nabla_{h}u(t)\|_{L^2}^{2}+\nu\|\nabla_{h}\nabla u\|_{L^2}^{2}
 \displaystyle\leq C,
\end{array}\end{equation}
where the constant $C$ depends only on $M,K_1$. Next, we  also use
$-\Delta u$ as test function, and get
$$\begin{array}{ll} &\displaystyle \frac{1}{2}\frac{d}{dt}\|\nabla
u\|_{L^2}^{2}+\nu\|\Delta u\|_{L^2}^{2}\\&=\displaystyle
\sum_{i,j,k=1}^{3}\int_{\mathbb{R}^{3}}u_{i}\partial_{i}u_{j}\partial_{kk}u_{j} dx\displaystyle\\
&\displaystyle=\sum_{j=1}^{3}\int_{\mathbb{R}^3}u_3\partial_3u_j\Delta_{h}u_jdx+
\sum_{i=1}^{2}\sum_{j=1}^{3}\int_{\mathbb{R}^3}u_i\partial_iu_j\Delta
u_jdx+\sum_{j=1}^{3}\int_{\mathbb{R}^3}u_3\partial_3u_j\partial_{33}u_jdx\\
&=L_{1}(t)+L_{2}(t)+L_{3}(t)
\end{array}$$
By integrating by parts a few times and using the incompressibility
condition, we get $L_{1}(t), L_{2}(t), L_{3}(t)$ as follows
$$
\begin{array}{ll}\displaystyle L_{1}(t)&\displaystyle=-\sum_{j=1}^{3}\sum_{k=1}^{2}\int_{\mathbb{R}^3}\partial_ku_3\partial_3u_j\partial_ku_jdx-
\sum_{j=1}^{3}\sum_{k=1}^{2}\int_{\mathbb{R}^3}u_3\partial_{3k}u_j\partial_ku_jdx\\
&\displaystyle=-\sum_{j=1}^{3}\sum_{k=1}^{2}\int_{\mathbb{R}^3}\partial_ku_3\partial_3u_j\partial_ku_jdx+\frac{1}{2}
\sum_{j=1}^{3}\sum_{k=1}^{2}\int_{\mathbb{R}^3}\partial_3u_3\partial_{k}u_j\partial_ku_jdx,
\end{array}$$
$$
\begin{array}{ll}\
\displaystyle
L_{2}(t)&\displaystyle=-\sum_{i=1}^{2}\sum_{j=1}^{3}\sum_{k=1}^{3}\int_{\mathbb{R}^3}\partial_ku_i\partial_iu_j\partial_ku_jdx-
\sum_{i=1}^{2}\sum_{j=1}^{3}\sum_{k=1}^{3}\int_{\mathbb{R}^3}u_i\partial_{ik}u_j\partial_ku_jdx\\
&=\displaystyle-\sum_{i=1}^{2}\sum_{j=1}^{3}\sum_{k=1}^{3}\int_{\mathbb{R}^3}\partial_ku_i\partial_iu_j\partial_ku_jdx+\frac{1}{2}
\sum_{i=1}^{2}\sum_{j=1}^{3}\sum_{k=1}^{3}\int_{\mathbb{R}^3}\partial_iu_i\partial_{k}u_j\partial_ku_jdx,
\end{array}$$
$$
\begin{array}{ll}\
\displaystyle
L_{3}(t)&\displaystyle=-\frac{1}{2}\sum_{j=1}^{3}\int_{\mathbb{R}^3}\partial_3u_3\partial_3u_j\partial_3u_jdx=
\frac{1}{2}\sum_{j=1}^{3}\int_{\mathbb{R}^3}(\partial_1u_1+\partial_2u_2)\partial_3u_j\partial_3u_jdx.
\end{array}$$
Therefore,  by \eqref{i} and H$\ddot{\mbox{o}}$lder's inequalities,
for every $i\ (i=1,2,3)$ we have
\begin{equation} \label{3.10}\begin{array}{ll}\displaystyle
|L_{i}(t)|& \displaystyle\leq C\int_{\mathbb{R}^3}|\nabla_h u| |\nabla u|^{2}dx\vspace{1mm}\\
& \displaystyle\leq C\|\nabla_{h}u\|_{L^2}\|\nabla u\|_{L^4}^{2}\vspace{1mm}\\
&\displaystyle\leq C\|\nabla_{h}u\|_{L^2}\|\nabla
u\|_{L^2}^{\frac{1}{2}}\|\nabla_{h}\nabla u\|_{L^2}\|\Delta
u\|_{L^2}^{\frac{1}{2}},
\end{array}\end{equation}
and hance we have
\begin{equation} \label{3.11}\begin{array}{ll}\displaystyle \frac{1}{2}\frac{d}{dt}\|\nabla
u\|_{L^2}^{2}+\nu\|\Delta u\|_{L^2}^{2}\leq \displaystyle
C\|\nabla_{h}u\|_{L^2}\|\nabla
u\|_{L^2}^{\frac{1}{2}}\|\nabla_{h}\nabla u\|_{L^2}\|\Delta
u\|_{L^2}^{\frac{1}{2}}.
\end{array}\end{equation}
Integrating \eqref{3.11}, applying  H$\ddot{\mbox{o}}$lder's and
Young's inequalities and combining \eqref{3.8} and \eqref{3.1}, we
obtain
\begin{equation} \label{3.12}\begin{array}{ll}
&\|\nabla
u\|_{L^2}^{2}+\displaystyle2\nu\displaystyle\int_{0}^{t}\|\Delta
u\|_{L^2}^{2}d\tau\\
\displaystyle &\hspace{0.3cm} \leq\|\nabla
u(0)\|_{L^2}^{2}+\left(\sup_{0\leq s\leq t}\|\nabla_{h}
u\|_{L^2}\right)\left(\displaystyle\int_{0}^{t}\|\nabla
u\|_{L^2}^{2}d\tau \right)^{\frac{1}{4}}\\
&\ \ \ \ \ \times\left(\displaystyle\int_{0}^{t}\|\nabla_{h}\nabla
u\|_{L^2}^{2}d\tau\right)^{\frac{1}{2}}\left(\displaystyle\int_{0}^{t}\|\Delta
u\|_{L^2}^{2}d\tau\right)^{\frac{1}{4}}\\
\displaystyle &\hspace{0.3cm} \leq\|\nabla
u(0)\|_{L^2}^{2}+C\displaystyle
 \left(\int_{0}^{t}\|\Delta
 u\|_{L^2}^{2}d\tau\right)^{{\frac{1}{4}}}\\
\displaystyle &\hspace{0.3cm} \leq\|\nabla
u(0)\|_{L^2}^{2}+C\displaystyle+
 \nu\int_{0}^{t}\|\Delta
 u\|_{L^2}^{2}d\tau
\end{array}\end{equation}
Absorbing the last term on the right hand side of \eqref{3.12}, it
immediately implies that \eqref{3.2}. We complete the proof of
Theorem \ref{t1.1} $(i)$.
 \par Next, we prove  $(ii)$. Similar to the proof of Theorem \ref{t1.1}  $(i)$,
we will apply Lemma \ref{l2.2} get the desired result. Therefore,
for the $\alpha,\beta$ in \eqref{5ab} and \eqref{f1}, we set
 $s=3$,
\begin{equation}\label{3.13}
r=\frac{\beta(4\alpha-3)}{\alpha(\beta+1)-\beta},\
\end{equation}
and
\begin{equation}\label{3.14}
\theta=\frac{\beta-\alpha}{3\alpha\beta-\alpha-2\beta}.\
\end{equation}
then such  $s,r$ and $\theta$ satisfy \eqref{3}. We select that
\begin{equation}\label{3.15}
a=\frac{\alpha\beta+\alpha-\beta}{\alpha\beta-\beta}, \
t=\frac{\beta(\alpha\beta+\alpha-\beta)}{\beta-\alpha},
\end{equation}
then the selected $a$ and $t$ satisfy \eqref{3a}. Note that$$
r-2=\frac{2\alpha\beta-\beta-2\alpha}{\alpha\beta+\alpha-\beta},
$$ by \eqref{f1}, we see that \eqref{3.15} and above equality imply \eqref{2} holds, and
furthermore,
$$
(1-\theta)(r-1)a=s=3,\ \ \theta(r-1)t=\beta.
$$
Therefore, taking the inner product of the equation \eqref{a} with
$-\Delta_{h}u$ in $L^{2}$, applying Lemma \ref{l2.2} with the
parameters in \eqref{3.13}-\eqref{3.15}, similar to the proof of
Theorem \ref{t1.1}, we have
\begin{equation}\label{3.16}
\begin{array}{ll}
 &\displaystyle \frac{1}{2}\frac{d}{dt}\|\nabla_{h}u\|_{L^2}^{2}+\nu\|\nabla_{h}\nabla u\|_{L^2}^{2}\\
 \displaystyle &\leq\displaystyle
 C\int_{\mathbb{R}^{3}}|u_{3}||\nabla u||\nabla_{h}\nabla u|dx \displaystyle \\
 \displaystyle &\leq\displaystyle
 C\left\|\|\partial_3u_3\|_{L^{\alpha}_{x_3}}\right\|_{L^{\beta}_{x_1,x_2}}^{\frac{1}{r}}
 \left\|\|\partial_3u_3\|_{L^{\alpha}_{x_3}}\right\|_{L^{\theta(r-1)t}_{x_1,x_2}}^{\frac{\theta(r-1)}{r}}
 \left\|\|u_3\|_{L^s_{x_3}}\right\|_{L^{(1-\theta)(r-1)a}_{x_1,x_2}}^{\frac{(1-\theta)(r-1)}{r}}
 \vspace{1mm}\\
 &\displaystyle\ \ \ \ \times\|\nabla u
\|_{L^2}^{\frac{r-2}{r}}\|\partial_1\nabla u
\|_{L^2}^{\frac{1}{r}}\|\partial_2\nabla u
\|_{L^2}^{\frac{1}{r}}\|\nabla_{h}\nabla u\|_{L^2}\vspace{1mm}\\
\displaystyle &\leq\displaystyle C
 \left\|\|\partial_3u_3\|_{L^{\alpha}_{x_3}}\right\|_{L^{\beta}_{x_1,x_2}}^{\frac{1+\theta(r-1)}{r}}
 \left\|u_3\right\|_{L^{3}}^{\frac{(1-\theta)(r-1)}{r}}
 \vspace{1mm}\\
 &\displaystyle\ \ \ \ \times\|\nabla u
\|_{L^2}^{\frac{r-2}{r}}\|\partial_1\nabla u
\|_{L^2}^{\frac{1}{r}}\|\partial_2\nabla u
\|_{L^2}^{\frac{1}{r}}\|\nabla_{h}\nabla u\|_{L^2}\\
\displaystyle &\leq\displaystyle C
 \left\|\|\partial_3u_3\|_{L^{\alpha}_{x_3}}\right\|_{L^{\beta}_{x_1,x_2}}^{\frac{1+\theta(r-1)}{r}}
 \left\|u_3\right\|_{L^{3}}^{\frac{(1-\theta)(r-1)}{r}}
 \|\nabla u
\|_{L^2}^{\frac{r-2}{r}}\|\nabla_{h}\nabla
u\|_{L^2}^{\frac{r+2}{r}}.
\end{array}\end{equation}
Integrating \eqref{3.16} in time, applying Young's inequality and
the assumption \eqref{5ab}, we get
\begin{equation}\label{3.17}
\begin{array}{ll}
 &\displaystyle \|\nabla_{h}u(t)\|_{L^2}^{2}+2\nu\|\nabla_{h}\nabla u\|_{L^2}^{2}\\
&\displaystyle\leq\|\nabla_{h}u_0\|_{L^2}^{2}
   +C\int_0^t
 \left\|\|\partial_3u_3\|_{L^{\alpha}_{x_3}}\right\|_{L^{\beta}_{x_1,x_2}}^{\frac{1+\theta(r-1)}{r}}
 \left\|u_3\right\|_{L^{3}}^{\frac{(1-\theta)(r-1)}{r}}
 \|\nabla u
\|_{L^2}^{\frac{r-2}{r}}\|\nabla_{h}\nabla
u\|_{L^2}^{\frac{r+2}{r}}d\tau\\
&\displaystyle\leq\|\nabla_{h}u_0\|_{L^2}^{2}
   +C\int_0^t
 \left\|\|\partial_3u_3\|_{L^{\alpha}_{x_3}}\right\|_{L^{\beta}_{x_1,x_2}}^{\frac{2(1+\theta(r-1))}{r-2}}
 \|\nabla u
\|_{L^2}^{2}d\tau+\nu\int_0^t\|\nabla_{h}\nabla u\|_{L^2}^{2}d\tau.
\end{array}\end{equation}
Absorbing the last term in \eqref{3.17}, and using \eqref{5ab} and
the energy inequality \eqref{3.1}, we have
\begin{equation}\label{3.18}
\begin{array}{ll}
 &\displaystyle \|\nabla_{h}u(t)\|_{L^2}^{2}+\nu\|\nabla_{h}\nabla u\|_{L^2}^{2}
 \displaystyle\leq C,
\end{array}\end{equation}
where the constant $C$ depends only on $M,K_1$. For the rest, we can
give the same process as in Theorem \ref{t1.1} $(i)$ to prove
\eqref{3.2}, and then to complete the proof of this theorem.
\end{proof}
\begin{proof}[Proof
of Theorem \ref{t1.3}] In view of the condition
$$
1<\alpha<+\infty,\ \
\max\left\{\frac{11\alpha-12}{3(\alpha-1)},3\right\}<
s\leq\frac{11\alpha-10}{3(\alpha-1)},
$$
we give the parameters $\beta,\theta,a, t, r$ as follows, and we
will check  one by one that all of them satisfy the assumptions in
Lemma \ref{l2.2}.
\begin{equation}\label{3.19}
\beta=\frac{2\alpha}{(11\alpha-10)-3s(\alpha-1)},\
\end{equation}
\begin{equation}\label{3.20}
r=\frac{2s(\alpha-1)+2\alpha}{(13\alpha-12)-3s(\alpha-1)},\
\end{equation}
\begin{equation}\label{3.21}
\theta=\frac{3s(\alpha-1)-11\alpha+12}{5s(\alpha-1)-11\alpha+12},\
\end{equation}
\begin{equation}\label{3.22}
a=\frac{(13\alpha-12)-3s(\alpha-1)}{2(\alpha-1)},\
\end{equation}
and
\begin{equation}\label{3.23}
t=\frac{2\alpha[(13\alpha-12)-3s(\alpha-1)]}{\left[(11\alpha-10)-3s(\alpha-1)\right][
3s(\alpha-1)-11\alpha+12]}.\
\end{equation}
By\eqref{5a1}, we have $\beta>1$. In fact,
$s\leq\frac{11\alpha-10}{3(\alpha-1)}$ (if
$s=\frac{11\alpha-10}{3(\alpha-1)}$, then $\beta=\infty$) implies
that
$$
s>\frac{9\alpha-10}{3(\alpha-1)}\Longleftrightarrow\beta>1,
$$
and then also by \eqref{5a1}, we see that
$$
s>\frac{11\alpha-12}{3(\alpha-1)}\Longrightarrow
s>\frac{9\alpha-10}{3(\alpha-1)}.
$$
For $r$, because of \begin{equation}\label{3.24}
s\leq\frac{11\alpha-10}{3(\alpha-1)}\Longrightarrow
s<\frac{13\alpha-12}{3(\alpha-1)},
\end{equation}
by \eqref{3.20} and \eqref{3.24}, we have
\begin{equation}\label{3.25}
r-2=\frac{8(s-3)(\alpha-1)}{(13\alpha-12)-3s(\alpha-1)}>0\Longleftarrow
\left\{\begin{array}{l} \displaystyle s>3\\
\displaystyle \alpha>1.\\
\end{array}
\right.
\end{equation}
By\eqref{5a2} and \eqref{3.21}, it is obviously that
$0\leq\theta<1.$ As to $a$, we see that
$$
s\leq\frac{11\alpha-10}{3(\alpha-1)}\Longrightarrow a\geq1,
$$
and moreover, we have
$$
t-1=\frac{9(\alpha-1)^2s^2-6(12\alpha-11)(\alpha-1)s+(21\alpha-20)(7\alpha-6)
}{\left[(11\alpha-10)-3s(\alpha-1)\right][
3s(\alpha-1)-11\alpha+12]}>0.
$$
Besides, one can check that
\begin{equation} \label{3.26}
 \left\{\begin{array}{l}
\displaystyle \theta(r-1)t=\beta,\vspace{1mm}\\
\displaystyle (1-\theta)(r-1)a=s,\vspace{1mm}\\
\displaystyle \frac{1}{a}+\frac{1}{t}=\frac{\beta-1}{\beta},\vspace{1mm}\\
\displaystyle
\frac{1-\theta}{s}+\frac{\theta(1-\alpha)}{\alpha}=\frac{\alpha-1}{\alpha(r-1)}.
\end{array}
\right. \end{equation} Therefore, all the conditions of Lemma
\ref{l2.2} are satisfied. Similar to Theorem \ref{t1.1}, we begin
with \eqref{3.6} and by using the parameters defined in
\eqref{3.19}-\eqref{3.23} and Lemma \ref{l2.2}, we get
\begin{equation}\label{3.27}
\begin{array}{ll}
 \displaystyle&\displaystyle \frac{1}{2}\frac{d}{dt}\|\nabla_{h}u\|_{L^2}^{2}+\nu\|\nabla_{h}\nabla u\|_{L^2}^{2}\\
 \displaystyle &\leq\displaystyle
 C\int_{\mathbb{R}^{3}}|u_{3}||\nabla u||\nabla_{h}\nabla u|dx \displaystyle \\
 \displaystyle &\leq\displaystyle
 C\left\|\|\partial_3u_3\|_{L^{\alpha}_{x_3}}\right\|_{L^{\beta}_{x_1,x_2}}^{\frac{1}{r}}
 \left\|\|\partial_3u_3\|_{L^{\alpha}_{x_3}}\right\|_{L^{\theta(r-1)t}_{x_1,x_2}}^{\frac{\theta(r-1)}{r}}
 \left\|\|u_3\|_{L^s_{x_3}}\right\|_{L^{(1-\theta)(r-1)a}_{x_1,x_2}}^{\frac{(1-\theta)(r-1)}{r}}
 \vspace{1mm}\\
 &\displaystyle\ \ \ \ \times\|\nabla u
\|_{L^2}^{\frac{r-2}{r}}\|\partial_1\nabla u
\|_{L^2}^{\frac{1}{r}}\|\partial_2\nabla u
\|_{L^2}^{\frac{1}{r}}\|\nabla_{h}\nabla u\|_{L^2}\vspace{1mm}\\
\displaystyle &\leq\displaystyle C
 \left\|\|\partial_3u_3\|_{L^{\alpha}_{x_3}}\right\|_{L^{\beta}_{x_1,x_2}}^{\frac{1+\theta(r-1)}{r}}
 \left\|u_3\right\|_{L^{s}}^{\frac{(1-\theta)(r-1)}{r}}
 \vspace{1mm}\\
 &\displaystyle\ \ \ \ \times\|\nabla u
\|_{L^2}^{\frac{r-2}{r}}\|\partial_1\nabla u
\|_{L^2}^{\frac{1}{r}}\|\partial_2\nabla u
\|_{L^2}^{\frac{1}{r}}\|\nabla_{h}\nabla u\|_{L^2}\\
\displaystyle &\leq\displaystyle C
 \left\|\|\partial_3u_3\|_{L^{\alpha}_{x_3}}\right\|_{L^{\beta}_{x_1,x_2}}^{\frac{1+\theta(r-1)}{r}}
 \left\|u_3\right\|_{L^{s}}^{\frac{(1-\theta)(r-1)}{r}}
 \|\nabla u
\|_{L^2}^{\frac{r-2}{r}}\|\nabla_{h}\nabla
u\|_{L^2}^{\frac{r+2}{r}}.
\end{array}\end{equation}
Integrating \eqref{3.26} in time, applying Young's inequality and
the assumption \eqref{5a2}, we get
\begin{equation}\label{3.28}
\begin{array}{ll}
 &\displaystyle \|\nabla_{h}u(t)\|_{L^2}^{2}+2\nu\|\nabla_{h}\nabla u\|_{L^2}^{2}\\
&\displaystyle\leq\|\nabla_{h}u_0\|_{L^2}^{2}
   +C\int_0^t
 \left\|\|\partial_3u_3\|_{L^{\alpha}_{x_3}}\right\|_{L^{\beta}_{x_1,x_2}}^{\frac{1+\theta(r-1)}{r}}
 \left\|u_3\right\|_{L^{s}}^{\frac{(1-\theta)(r-1)}{r}}
 \|\nabla u
\|_{L^2}^{\frac{r-2}{r}}\|\nabla_{h}\nabla
u\|_{L^2}^{\frac{r+2}{r}}d\tau\\
&\displaystyle\leq\|\nabla_{h}u_0\|_{L^2}^{2}
   +C\int_0^t
 \left\|\|\partial_3u_3\|_{L^{\alpha}_{x_3}}\right\|_{L^{\beta}_{x_1,x_2}}^{\frac{2(1+\theta(r-1))}{r-2}}
 \|\nabla u
\|_{L^2}^{2}d\tau+\nu\int_0^t\|\nabla_{h}\nabla u\|_{L^2}^{2}d\tau.
\end{array}\end{equation}
Absorbing the last term in \eqref{3.28}, we have
\begin{equation}\label{3.29}
\begin{array}{ll}
\displaystyle &\displaystyle \|\nabla_{h}u(t)\|_{L^2}^{2}+\nu\|\nabla_{h}\nabla u\|_{L^2}^{2}\\
&\displaystyle\ \ \ \ \ \leq\|\nabla_{h}u_0\|_{L^2}^{2}
   +C\int_0^t
 \left\|\|\partial_3u_3\|_{L^{\alpha}_{x_3}}\right\|_{L^{\beta}_{x_1,x_2}}^{\frac{2(1+\theta(r-1))}{r-2}}
 \|\nabla u
\|_{L^2}^{2}d\tau\\
&\displaystyle\ \ \ \ \ =\|\nabla_{h}u_0\|_{L^2}^{2}
   +C\int_0^t
 \left\|\|\partial_3u_3\|_{L^{\alpha}_{x_3}}\right\|_{L^{\beta}_{x_1,x_2}}^{\frac{\alpha}{2(s-3)(\alpha-1)}}
 \|\nabla u
\|_{L^2}^{2}d\tau,
\end{array}\end{equation}
where we note that
$\frac{2(1+\theta(r-1))}{r-2}=\frac{\alpha}{2(s-3)(\alpha-1)}.$
Next, we apply the estimates on $\|\nabla u(t)\|_{L^2}^{2}$. In view
of \eqref{3.11}, and integrating it in time, applying
H$\ddot{\mbox{o}}$lder's and Young's inequalities and combining
\eqref{3.29} and \eqref{3.1}, we obtain
\begin{equation} \label{3.30}\begin{array}{ll}
&\|\nabla
u\|_{L^2}^{2}+\displaystyle2\nu\displaystyle\int_{0}^{t}\|\Delta
u\|_{L^2}^{2}d\tau\\
\displaystyle &\hspace{0.3cm} \leq\|\nabla
u_0\|_{L^2}^{2}+\left(\sup_{0\leq s\leq t}\|\nabla_{h}
u\|_{L^2}\right)\left(\displaystyle\int_{0}^{t}\|\nabla
u\|_{L^2}^{2}d\tau \right)^{\frac{1}{4}}\\
&\ \ \ \ \ \times\left(\displaystyle\int_{0}^{t}\|\nabla_{h}\nabla
u\|_{L^2}^{2}d\tau\right)^{\frac{1}{2}}\left(\displaystyle\int_{0}^{t}\|\Delta
u\|_{L^2}^{2}d\tau\right)^{\frac{1}{4}}\\
\displaystyle &\hspace{0.3cm} \leq C\displaystyle\left(\int_0^t
 \left\|\|\partial_3u_3\|_{L^{\alpha}_{x_3}}\right\|_{L^{\beta}_{x_1,x_2}}^{\frac{\alpha}{2(s-3)(\alpha-1)}}
 \|\nabla u
\|_{L^2}^{2}d\tau\right)\times
 \left(\int_{0}^{t}\|\Delta
 u\|_{L^2}^{2}d\tau\right)^{{\frac{1}{4}}}\\
 &\hspace{0.5cm} \ \ + \displaystyle\|\nabla_{h}
u_0\|_{L^2}^{2}\left(\displaystyle\int_{0}^{t}\|\Delta
u\|_{L^2}^{2}d\tau\right)^{\frac{1}{4}}+\|\nabla u_0\|_{L^2}^{2}.
\end{array}\end{equation}
By H$\ddot{\mbox{o}}$lder's and Young's inequalities, one has
\begin{equation} \label{3.31}\begin{array}{ll}
&\|\nabla
u\|_{L^2}^{2}+\displaystyle2\nu\displaystyle\int_{0}^{t}\|\Delta
u\|_{L^2}^{2}d\tau\\
\displaystyle &\hspace{0.3cm} \leq C\left(1+\|\nabla
u_0\|_{L^2}^{\frac{8}{3}}\right)+C\displaystyle\left(\int_0^t
 \left\|\|\partial_3u_3\|_{L^{\alpha}_{x_3}}\right\|_{L^{\beta}_{x_1,x_2}}^{\frac{\alpha}{2(s-3)(\alpha-1)}}
 \|\nabla u
\|_{L^2}^{2}d\tau\right)^{\frac{4}{3}}\\
 &\hspace{0.5cm} \ \ \displaystyle +\nu\int_{0}^{t}\|\Delta
 u\|_{L^2}^{2}d\tau\\
\displaystyle &\hspace{0.3cm} \leq C\displaystyle\left(\int_0^t
 \left\|\|\partial_3u_3\|_{L^{\alpha}_{x_3}}\right\|_{L^{\beta}_{x_1,x_2}}^{\frac{2\alpha}{3(s-3)(\alpha-1)}}
 \|\nabla u
\|_{L^2}^{2}d\tau\right)\times
 \left(\int_{0}^{t}\|\nabla
 u\|_{L^2}^{2}d\tau\right)^{{\frac{1}{4}}}\\
 &\hspace{0.5cm} \ \ + \displaystyle C\left(1+\|\nabla
u_0\|_{L^2}^{\frac{8}{3}}\right)+\nu\int_{0}^{t}\|\Delta
 u\|_{L^2}^{2}d\tau.
\end{array}\end{equation}
Absorbing the last term on the right hand side of \eqref{3.31}, and
thanks to the energy inequality \eqref{3.1}, we get
\begin{equation} \label{3.32}\begin{array}{ll}
&\|\nabla
u\|_{L^2}^{2}+\displaystyle\nu\displaystyle\int_{0}^{t}\|\Delta
u\|_{L^2}^{2}d\tau\\
\displaystyle &\hspace{0.3cm} \leq C\left(1+\|\nabla
u_0\|_{L^2}^{\frac{8}{3}}\right)+C\left(\displaystyle\int_0^t
 \left\|\|\partial_3u_3\|_{L^{\alpha}_{x_3}}\right\|_{L^{\beta}_{x_1,x_2}}^{\frac{2\alpha}{3(s-3)(\alpha-1)}}
 \|\nabla u
\|_{L^2}^{2}d\tau\right).
\end{array}\end{equation}
Therefore, by Gronwall's inequality and \eqref{z2}, we obtain
\begin{equation} \label{3.33}\begin{array}{ll}
&\|\nabla
u\|_{L^2}^{2}+\displaystyle\nu\displaystyle\int_{0}^{t}\|\Delta
u\|_{L^2}^{2}d\tau\leq C\left(1+\|\nabla
u_0\|_{L^2}^{\frac{8}{3}}\right)(1+M)e^{CM},
\end{array}\end{equation}
 for all  $t\in(0,T^{*})$. Therefore, the $ H^1$ norm of the strong
solution $u$ is bounded on the maximal interval of existence $(0,
T^{*})$. This completes the proof of Theorem \ref{t1.3}.
\end{proof}

\begin{proof}[Proof
of Theorem \ref{t1.4}] Since $\alpha, \beta$ and $ s$ satisfy
\eqref{z4} and \eqref{z5}, for any arbitrary small positive constant
$\epsilon$ satisfying $0<\epsilon <\min\left\{\frac{4}{10},
\frac{8\beta-12}{11\beta-12}\right\}$,
 we can choose $\alpha$ such that
\begin{equation}\label{3.41}
\frac{\beta}{2\beta-2}<\alpha\leq
\frac{(4-10\epsilon)\beta}{(8-11\epsilon)\beta+2\epsilon-8},
\end{equation}
where we used $\beta<2$, and then we choose
\begin{equation}\label{3.41a}
s=\frac{\alpha\beta-2\beta+2\alpha}{(1-\epsilon)(\alpha-1)\beta}.
\end{equation}
From \eqref{3.41}, it is easy to check that
$$
s\geq\frac{11\alpha\beta-10\beta-2\alpha}{3(\alpha-1)\beta}>3,
$$
where we use the fact that
$\frac{\beta}{2\beta-2}<\alpha\Longrightarrow
\frac{11\alpha\beta-10\beta-2\alpha}{3(\alpha-1)\beta}>3.$ Next, we
set
\begin{equation}\label{3.41aa}
r=\frac{(s\alpha+\alpha-s)\beta}{\alpha\beta+\alpha-\beta},\
\end{equation}
\begin{equation}\label{3.42}
\theta=\frac{\beta-\alpha}{s\alpha\beta-s\beta-\alpha+\beta},\
\end{equation}
\begin{equation}\label{3.43}
a=\frac{\alpha\beta+\alpha-\beta}{(\alpha-1)\beta},\
\end{equation}
\begin{equation}\label{3.44}
t=\frac{(\alpha\beta+\alpha-\beta)\beta}{\beta-\alpha}.
\end{equation}
 From \eqref{3.41aa}, we have
$$
r-2=\frac{(s-1)\alpha\beta-(s-2)\beta-2\alpha}{\alpha\beta+\alpha-\beta},
$$
and by \eqref{z5}, we have $3\alpha\beta s-11\alpha\beta-3\beta
s+10\beta+2\alpha\geq0$ and $2\alpha\beta -\beta-2\alpha>0.$
Therefore, one has
$$
3\alpha\beta s-11\alpha\beta-3\beta
s+10\beta+2\alpha>0\Longleftrightarrow3[(s-1)\alpha\beta-(s-2)\beta-2\alpha]\geq4(2\alpha\beta
-\beta-2\alpha)> 0,
$$
 and finally we get $r>2 $ and $(s-1)\alpha\beta-(s-2)\beta-2\alpha>0$. Since
$$
(s-1)\alpha\beta>(s-2)\beta+2\alpha\Longleftrightarrow
s\alpha\beta-s\beta-\alpha+\beta>\alpha\beta+\alpha-\beta=(\alpha-1)\beta+\alpha>0,
$$
it is easy for us to get $0\leq\theta<1.$ Moreover,  $a\geq 1$ and
$t\geq 1$ are obviously.  All parameters selected above satisfy the
conditions of Lemma \ref{l2.2}, and similar to Theorem \ref{t1.3}
(see \eqref{3.26} and \eqref{3.27}), one has
\begin{equation}\label{3.45}
\begin{array}{ll}
 &\displaystyle \|\nabla_{h}u(t)\|_{L^2}^{2}+\nu\|\nabla_{h}\nabla u\|_{L^2}^{2}\\
&\displaystyle\leq\|\nabla_{h}u_0\|_{L^2}^{2}
   +C\int_0^t
 \left\|\|\partial_3u_3\|_{L^{\alpha}_{x_3}}\right\|_{L^{\beta}_{x_1,x_2}}^{\frac{1+\theta(r-1)}{r}}
 \left\|u_3\right\|_{L^{s}}^{\frac{(1-\theta)(r-1)}{r}}
 \|\nabla u
\|_{L^2}^{\frac{r-2}{r}}\|\nabla_{h}\nabla
u\|_{L^2}^{\frac{r+2}{r}}d\tau\\
&\displaystyle\leq\|\nabla_{h}u_0\|_{L^2}^{2}
   +C\int_0^t
 \left\|\|\partial_3u_3\|_{L^{\alpha}_{x_3}}\right\|_{L^{\beta}_{x_1,x_2}}^{\frac{2(1+\theta(r-1))}{r-2}}
 \left\|u_3\right\|_{L^{s}}^{\frac{2(1-\theta)(r-1)}{r-2}}\|\nabla u
\|_{L^2}^{2}d\tau\\
&\displaystyle=\|\nabla_{h}u_0\|_{L^2}^{2}
   +C\int_0^t
 \left\|\|\partial_3u_3\|_{L^{\alpha}_{x_3}}\right\|_{L^{\beta}_{x_1,x_2}}
 ^{\frac{2\alpha\beta}{(s-1)\alpha\beta-(s-2)\beta-2\alpha}}
 \left\|u_3\right\|_{L^{s}}^{\frac{2(\alpha-1)\beta s}{(s-1)\alpha\beta-(s-2)\beta-2\alpha}}\|\nabla u
\|_{L^2}^{2}d\tau.
\end{array}\end{equation}
Next, we apply the estimates on $\|\nabla u(t)\|_{L^2}^{2}$. In view
of \eqref{3.11}, and integrating it in time, applying
H$\ddot{\mbox{o}}$lder's and Young's inequalities and combining
\eqref{3.45} and \eqref{3.1}, we obtain
\begin{equation} \label{3.46}\begin{array}{ll}
&\|\nabla
u\|_{L^2}^{2}+\displaystyle2\nu\displaystyle\int_{0}^{t}\|\Delta
u\|_{L^2}^{2}d\tau\\
\displaystyle &\hspace{0.3cm} \leq\|\nabla
u_0\|_{L^2}^{2}+\left(\sup_{0\leq s\leq t}\|\nabla_{h}
u\|_{L^2}\right)\left(\displaystyle\int_{0}^{t}\|\nabla
u\|_{L^2}^{2}d\tau \right)^{\frac{1}{4}}\\
&\ \ \ \ \ \times\left(\displaystyle\int_{0}^{t}\|\nabla_{h}\nabla
u\|_{L^2}^{2}d\tau\right)^{\frac{1}{2}}\left(\displaystyle\int_{0}^{t}\|\Delta
u\|_{L^2}^{2}d\tau\right)^{\frac{1}{4}}\\
\displaystyle &\hspace{0.3cm} \leq C\displaystyle\left(\int_0^t
 \left\|\|\partial_3u_3\|_{L^{\alpha}_{x_3}}\right\|_{L^{\beta}_{x_1,x_2}}
 ^{\frac{2\alpha\beta}{(s-1)\alpha\beta-(s-2)\beta-2\alpha}}
 \left\|u_3\right\|_{L^{s}}^{\frac{2(\alpha-1)\beta s}{(s-1)\alpha\beta-(s-2)\beta-2\alpha}}
 \|\nabla u
\|_{L^2}^{2}d\tau\right)\\
 &\hspace{0.5cm}\displaystyle \ \ \times
 \left(\int_{0}^{t}\|\Delta
 u\|_{L^2}^{2}d\tau\right)^{{\frac{1}{4}}}+ \displaystyle\|\nabla_{h}
u_0\|_{L^2}^{2}\left(\displaystyle\int_{0}^{t}\|\Delta
u\|_{L^2}^{2}d\tau\right)^{\frac{1}{4}}+\|\nabla u_0\|_{L^2}^{2}.
\end{array}\end{equation}
By H$\ddot{\mbox{o}}$lder's and Young's inequalities, one has
\begin{equation} \label{3.47}\begin{array}{ll}
&\|\nabla
u\|_{L^2}^{2}+\displaystyle2\nu\displaystyle\int_{0}^{t}\|\Delta
u\|_{L^2}^{2}d\tau\\
\displaystyle &\hspace{0.3cm} \leq C\displaystyle\left(\int_0^t
 \left\|\|\partial_3u_3\|_{L^{\alpha}_{x_3}}\right\|_{L^{\beta}_{x_1,x_2}}
 ^{\frac{2\alpha\beta}{(s-1)\alpha\beta-(s-2)\beta-2\alpha}}
 \left\|u_3\right\|_{L^{s}}^{\frac{2(\alpha-1)\beta s}{(s-1)\alpha\beta-(s-2)\beta-2\alpha}}
 \|\nabla u
\|_{L^2}^{2}d\tau\right)^{\frac{4}{3}}\\
 &\hspace{0.5cm} \ \ \displaystyle +C\left(1+\|\nabla
u_0\|_{L^2}^{\frac{8}{3}}\right)+\nu\int_{0}^{t}\|\Delta
 u\|_{L^2}^{2}d\tau\\
\displaystyle &\hspace{0.3cm} \leq C\displaystyle\left(\int_0^t
 \left\|\|\partial_3u_3\|_{L^{\alpha}_{x_3}}\right\|_{L^{\beta}_{x_1,x_2}}
 ^{\frac{8\alpha\beta}{3((s-1)\alpha\beta-(s-2)\beta-2\alpha)}}
 \left\|u_3\right\|_{L^{s}}^{\frac{8(\alpha-1)\beta s}{3((s-1)\alpha\beta-(s-2)\beta-2\alpha)}}
 \|\nabla u
\|_{L^2}^{2}d\tau\right)\\
 &\hspace{0.5cm} \ \ \displaystyle\times
 \left(\int_{0}^{t}\|\nabla
 u\|_{L^2}^{2}d\tau\right)^{{\frac{1}{4}}}+ \displaystyle C\left(1+\|\nabla
u_0\|_{L^2}^{\frac{8}{3}}\right)+\nu\int_{0}^{t}\|\Delta
 u\|_{L^2}^{2}d\tau.
\end{array}\end{equation}
Absorbing the last term on the right hand side of \eqref{3.31}, and
thanks to the energy inequality \eqref{3.1}, we get
\begin{equation} \label{3.48}\begin{array}{ll}
&\|\nabla
u\|_{L^2}^{2}+\displaystyle\nu\displaystyle\int_{0}^{t}\|\Delta
u\|_{L^2}^{2}d\tau\leq C\left(1+\|\nabla
u_0\|_{L^2}^{\frac{8}{3}}\right)\\
\displaystyle &\hspace{0.3cm} \displaystyle+ C\left(\int_0^t
 \left\|\|\partial_3u_3\|_{L^{\alpha}_{x_3}}\right\|_{L^{\beta}_{x_1,x_2}}
 ^{\frac{8\alpha\beta}{3((s-1)\alpha\beta-(s-2)\beta-2\alpha)}}
 \left\|u_3\right\|_{L^{s}}^{\frac{8(\alpha-1)\beta s}{3((s-1)\alpha\beta-(s-2)\beta-2\alpha)}}
 \|\nabla u
\|_{L^2}^{2}d\tau\right).\
\end{array}\end{equation}
Now, we set the pair of  conjugate indexes as follows$$
h=\frac{3[(s-1)\alpha\beta-(s-2)\beta-2\alpha]}{4(2\alpha\beta
-\beta-2\alpha)},
$$
and
$$
h^{\prime}=\frac{3[(s-1)\alpha\beta-(s-2)\beta-2\alpha]}{3\alpha\beta
s-11\alpha\beta-3\beta s+10\beta+2\alpha},
$$
where we note that  \eqref{z5} implies $h\geq1$ and then
$h^{\prime}>1.$ Therefore, by Young's inequality and \eqref{3.48},
it  follows that
\begin{equation} \label{3.49}\begin{array}{ll}
&\|\nabla
u\|_{L^2}^{2}+\displaystyle\nu\displaystyle\int_{0}^{t}\|\Delta
u\|_{L^2}^{2}d\tau\leq C\left(1+\|\nabla
u_0\|_{L^2}^{\frac{8}{3}}\right)\\
\displaystyle &\hspace{0.3cm} \displaystyle+ C\left(\int_0^t\left(
 \left\|\|\partial_3u_3\|_{L^{\alpha}_{x_3}}\right\|_{L^{\beta}_{x_1,x_2}}
 ^{\frac{2\alpha\beta}{2\alpha\beta-\beta-2\alpha}}+
 \left\|u_3\right\|_{L^{s}}^{\frac{8(\alpha-1)\beta s}{3\alpha\beta
s-11\alpha\beta-3\beta s+10\beta+2\alpha}}
 \right)\|\nabla u
\|_{L^2}^{2}d\tau\right).\
\end{array}\end{equation}
Applying \eqref{3.41a}, 
we have
\begin{equation} \label{3.52}\begin{array}{ll}
\displaystyle\frac{3}{s}+\frac{3\alpha\beta s-11\alpha\beta-3\beta
s+10\beta+2\alpha}{4(\alpha-1)\beta s} &\displaystyle= \frac{
\alpha\beta-2\beta+3 \alpha\beta s-3\beta s+2\alpha}{4(\alpha-1)\beta s}\\
&\displaystyle=1-\frac{\epsilon}{4}.
\end{array}\end{equation}
Therefore, since $\epsilon$ is arbitrary, by Gronwall's inequality
and \eqref{5a4}-\eqref{z5}, \eqref{3.49} implies that
\begin{equation} \label{3.53}\begin{array}{ll}
&\|\nabla
u\|_{L^2}^{2}+\displaystyle\nu\displaystyle\int_{0}^{t}\|\Delta
u\|_{L^2}^{2}d\tau\leq C\left(1+\|\nabla
u_0\|_{L^2}^{\frac{8}{3}}\right)(1+M)e^{CM},
\end{array}\end{equation}
 for all  $t\in(0,T^{*})$. Therefore, the $ H^1$ norm of the strong
solution $u$ is bounded on the maximal interval of existence $(0,
T^{*})$. This completes the proof of Theorem \ref{t1.4}.
\end{proof}

\begin{proof}[Proof
of Theorem \ref{t1.5}]Since $\alpha, \beta$ and $ s$ satisfy
\eqref{z7} and \eqref{z8}, for any arbitrary small positive constant
$\epsilon$ satisfying $0<\epsilon <\min\left\{\frac{2}{9},
\frac{2\beta-2}{\beta}\right\}$,
 we can choose $\alpha$ such that
\begin{equation}\label{3.54}
\frac{4\beta}{(8-\epsilon)\beta-8}\leq\alpha\leq
\frac{\beta}{(2-\epsilon)\beta-2},
\end{equation}
 and then we choose
\begin{equation}\label{3.55}
s=\frac{(1+\epsilon)\alpha\beta-2\beta+2\alpha}{(\alpha-1)\beta}.
\end{equation}
From \eqref{3.54}, it is easy to check that
$$
3\leq s\leq \frac{9\alpha\beta-6\beta-6\alpha}{(\alpha-1)\beta}.
$$
Next, we set $r, \theta, a, t$   as in
\eqref{3.41aa},\eqref{3.42},\eqref{3.43}, \eqref{3.44} respectively.
From \eqref{3.41aa}, we have
$$
r-2=\frac{(s-1)\alpha\beta-(s-2)\beta-2\alpha}{\alpha\beta+\alpha-\beta},
$$
and by \eqref{z8}, we have  $2\alpha\beta -\beta-2\alpha>0.$
Therefore, one has
$$
2\alpha\beta -\beta-2\alpha\geq0\Longleftrightarrow
\frac{\alpha\beta-2\beta+2\alpha}{(\alpha-1)\beta}<3\leq s
\Longleftrightarrow(s-1)\alpha\beta-(s-2)\beta-2\alpha>0,
$$
 and finally we get $r>2 $. Since
$$
(s-1)\alpha\beta>(s-2)\beta+2\alpha\Longleftrightarrow
s\alpha\beta-s\beta-\alpha+\beta>\alpha\beta+\alpha-\beta=(\alpha-1)\beta+\alpha>0,
$$
it is easy for us to get $0\leq\theta<1.$ Moreover,  $a\geq 1$ and
$t\geq 1$ are obviously.  All parameters selected above satisfy the
conditions of Lemma \ref{l2.2}.  Similar to the proof of Theorem
\ref{t1.4}, we have \eqref{3.48}, and we restate below
\begin{equation} \label{3.60}\begin{array}{ll}
&\|\nabla
u\|_{L^2}^{2}+\displaystyle\nu\displaystyle\int_{0}^{t}\|\Delta
u\|_{L^2}^{2}d\tau\leq C\left(1+\|\nabla
u_0\|_{L^2}^{\frac{8}{3}}\right)\\
\displaystyle &\hspace{0.3cm} \displaystyle+ C\left(\int_0^t
 \left\|\|\partial_3u_3\|_{L^{\alpha}_{x_3}}\right\|_{L^{\beta}_{x_1,x_2}}
 ^{\frac{8\alpha\beta}{3((s-1)\alpha\beta-(s-2)\beta-2\alpha)}}
 \left\|u_3\right\|_{L^{s}}^{\frac{8(\alpha-1)\beta s}{3((s-1)\alpha\beta-(s-2)\beta-2\alpha)}}
 \|\nabla u
\|_{L^2}^{2}d\tau\right).\
\end{array}\end{equation}
Now, we set the pair of  conjugate index as follows
$$
h=\frac{3[(s-1)\alpha\beta-(s-2)\beta-2\alpha]}{4\beta(\alpha-1)(s-3)},
$$
and
$$
h^{\prime}=\frac{3[(s-1)\alpha\beta-(s-2)\beta-2\alpha]}
{9\alpha\beta-\alpha\beta s+\beta s-6\beta-6\alpha},
$$where we note that  \eqref{z8} implies $h\geq1$ and then
$h^{\prime}>1.$ Therefore, by Young's inequality, \eqref{3.60}
implies that
\begin{equation} \label{3.61}\begin{array}{ll}
&\|\nabla
u\|_{L^2}^{2}+\displaystyle\nu\displaystyle\int_{0}^{t}\|\Delta
u\|_{L^2}^{2}d\tau\leq C\left(1+\|\nabla
u_0\|_{L^2}^{\frac{8}{3}}\right)\\
\displaystyle &\hspace{0.3cm} \displaystyle+ C\left(\int_0^t\left(
 \left\|\|\partial_3u_3\|_{L^{\alpha}_{x_3}}\right\|_{L^{\beta}_{x_1,x_2}}
 ^{\frac{8\alpha\beta}{9\alpha\beta-\alpha\beta s+\beta s-6\beta-6\alpha}}+
 \left\|u_3\right\|_{L^{s}}^{\frac{2s}{s-3}}
 \right)\|\nabla u
\|_{L^2}^{2}d\tau\right).\
\end{array}\end{equation}
Applying \eqref{3.55}, 
we have
\begin{equation} \label{3.62}\begin{array}{ll}
\displaystyle\frac{1}{\alpha}+\frac{2}{\beta}+\frac{9\alpha\beta-\alpha\beta
s+\beta s-6\beta-6\alpha}{4\alpha\beta} &\displaystyle=
\frac{9\alpha\beta+\beta
s-2\beta+2\alpha-\alpha\beta s}{4\alpha\beta}\\
&\displaystyle=2-\frac{\epsilon}{4}.
\end{array}\end{equation}
Therefore, since $\epsilon$ is arbitrary, by Gronwall's inequality
and \eqref{5a5}-\eqref{z8}, \eqref{3.61} implies that
\begin{equation} \label{3.33}\begin{array}{ll}
&\|\nabla
u\|_{L^2}^{2}+\displaystyle\nu\displaystyle\int_{0}^{t}\|\Delta
u\|_{L^2}^{2}d\tau\leq C\left(1+\|\nabla
u_0\|_{L^2}^{\frac{8}{3}}\right)(1+M)e^{CM},
\end{array}\end{equation}
 for all  $t\in(0,T^{*})$. Therefore, the $ H^1$ norm of the strong
solution $u$ is bounded on the maximal interval of existence $(0,
T^{*})$. This completes the proof of Theorem \ref{t1.5}.
\end{proof}

}
\end{document}